\numberwithin{equation}{section}
\theoremstyle{plain}
\newtheorem{Th}{Theorem}[section]
\newtheorem{Lemma}[Th]{Lemma}
\newtheorem{Cor}[Th]{Corollary}
\newtheorem{Prop}[Th]{Proposition}
\theoremstyle{definition}
\newtheorem{Rem}[Th]{Remark}
\newtheorem{?}[Th]{Problem}
\def\al{\alpha}
\def\w{\wedge}
\def\R{\mathbb{R}}
\def\C{\mathbb{C}}
\def\Q{\mathbb{H}}
\def\Lm{\Lambda}
\def\om{\omega}
\def\Om{\Omega}
\def\vp{\varphi}
\def\dds{\frac{\partial}{\partial s}}
\def\ip{\raise1pt\hbox{\large$\lrcorner$}\>}
\begin{document}
	
\title{Spin(7) metrics from K\"ahler Geometry}

\author[U. Fowdar]{Udhav Fowdar}
	
\address{University College London \\ Department of Mathematics \\
Gower Street \\
WC1E  6BT\\
London \\
UK}
	
\email{udhav.fowdar.12@ucl.ac.uk}
		
%\subjclass[2010]{Primary: 05C??. Secondary: 05C??}
	
\keywords{Exceptional holonomy, K\"ahler geometry, Torus action, $Spin(7)$-structure, $G_2$-structure, $SU(3)$-structure.} 
\subjclass[2010]{53C10, 53C29, 53C55}	
	
\begin{abstract} 
We investigate the $\mathbb{T}^2$-quotient of a torsion free $Spin(7)$-structure on an $8$-manifold under the assumption that the quotient $6$-manifold is K\"ahler.
We show that there exists either a Hamiltonian $S^1$ or $\mathbb{T}^2$ action on the quotient preserving the complex structure. Performing a K\"ahler reduction in each case reduces the problem of finding $Spin(7)$ metrics to studying a system of PDEs on either a $4$- or $2$-manifold with trivial canonical bundle, which in the compact case corresponds to either $\mathbb{T}^4$, a $K3$ surface or an elliptic curve. 
By reversing this construction we give infinitely many new explicit examples of $Spin(7)$ holonomy metrics. In the simplest case, our result can be viewed as an extension of the Gibbons-Hawking ansatz.

\end{abstract}

\maketitle
\tableofcontents

\section{Introduction}

In this paper we study the K\"ahler reduction of torsion free $Spin(7)$-structures. More specifically we consider a non-compact eight-manifold $N^8$ endowed with a torsion free $Spin(7)$-structure, determined by a closed $4$-form $\Phi$, which is invariant under the free action of a given two-torus. In general the quotient six-manifold $P^6$ is only almost K\"ahler. Under the further assumption that the almost complex structure is integrable i.e. $P^6$ is K\"ahler, we discover that it inherits naturally either a $\mathbb{C}^\times$ or $(\mathbb{C}^\times)^2$-action. This allows us to perform a K\"ahler reduction, in the sense that this is both a symplectic and holomorphic quotient, to a complex surface $M^4$ or a complex curve $\Sigma^2$. Our main result is that one can reverse this construction i.e. starting from a K\"ahler manifold $M^4$ or $\Sigma^2$ with some additional data we can construct a $\mathbb{T}^2$-invariant $Spin(7)$ metric. Put differently, our construction allows us to reduce the $Spin(7)$ system of PDEs on $N^8$ ($d\Phi=0$) to a simpler system on a K\"ahler manifold. 
By solving these equations  in special cases we give many new explicit, though incomplete, examples of metrics with holonomy \textit{equal} to $Spin(7)$. The precise statements of our main results are given in Corollary \ref{spin7cor} and Theorem \ref{secondmaintheorem}.\\

\textbf{Motivation.} In \cite{Apostolov2003} Apostolov and Salamon considered the problem of taking the circle reduction of a $G_2$ manifold $L^7$ under the assumption that the quotient manifold $P^6$ is K\"ahler. They discovered a surprisingly rich underlying geometry which led to the construction many explicit $G_2$ holonomy metrics. Due to the intricate relation between $G_2$ and $Spin(7)$ geometry, a natural question to ask is whether a similar construction also holds for $Spin(7)$ manifolds. This investigation is precisely what led to the current paper. 

An important problem in differential geometry is the construction of Ricci flat metrics in higher dimensions. Due to the complex nature of the PDEs this is a very challenging task. In dimensions $7$ and $8$, however, an often simpler problem is to construct $G_2$ and $Spin(7)$ holonomy metrics (which are automatically Ricci flat) as the PDEs are of first order. This remains nonetheless a daunting problem as in general one still has to deal with a system of ($49$ or $56$) PDEs cf. \cite{Bryant1987}. The innovative idea of Apostolov and Salamon was that if one could reduce such a problem to a problem in K\"ahler geometry then one could considerably simplify these equations. This is essentially due to fact that in K\"ahler geometry one can often use the $dd^c$-lemma to reduce complicated system of PDEs to one involving only a function, e.g. the K\"ahler potential as in Yau's proof of the Calabi conjecture \cite{YauproofofCalabi}. In our present context we shall primarily appeal to the local $dd^c$-lemma to reduce the $Spin(7)$ equations to a \textit{single} PDE.

We split our analysis in two cases corresponding to the $\C^\times$ and $(\C^\times)^2$ action on $P^6$.

Our construction, in the $\C^\times$ action case recovers the results of \cite{Apostolov2003} in the special situation when $N^8$ is the product of a $G_2$ manifold $L^7$ and a circle. The key point of our construction relies on the fact that  the K\"ahler assumption on $P^6$ implies that $M^4$ is endowed with a holomorphic symplectic form $\om_2+i\om_3$. The 
$\mathbb{T}^2$ bundle $N^8\to P^6$ is then determined, up to finite covers, by the cohomology classes $[\om_2], [\om_3] \in H^2(M,\mathbb{Z})$, ignoring factors of $2\pi$, whereas in \cite{Apostolov2003} the $S^1$ bundle $L^7\to P^6$ was determined by just one of these classes.
In the simplest instance we can take $M^4$ to be a hyperK\"ahler manifold, then our construction can be viewed an as extension of the Gibbons-Hawking ansatz, which itself  gives a way of constructing $4$-dimensional hyperK\"ahler metrics, to $Spin(7)$ metrics, see Corollary \ref{GibbonsHawkingCor1} and \ref{GibbonsHawkingCor2}. Thus, this gives an elementary way of constructing local examples of $Spin(7)$ metrics starting from just a positive harmonic function on an open set of $\R^3$. Another interesting aspect of our construction is that it can also be viewed as a special case of the $\mathbb{T}^3$ reduction of $Spin(7)$ metrics via multi-moments as described by Madsen in \cite{MadsenT3multimoment}. This $\mathbb{T}^3$ action is obtained in our setting by considering, in addition to the original $\mathbb{T}^2$ action, the horizontal lift of the $S^1\subset \C^\times$. The multi-moment map turns out to be the actual symplectic moment map for the K\"ahler reduction from $P^6$ to $M^4$. This shows that the name multi-moment map is indeed befitting.

In the case of a $(\C^\times)^2$ action, we show that a similar theory holds. We show that the general problem of constructing a $Spin(7)$ metric can be reduced to choosing a positive harmonic function and solving a single PDE on a (real) $2$-dimensional surface $\Sigma^2$ with trivial first Chern class. From this we are able to construct explicit examples of $Spin(7)$ metrics starting from just an elliptic curve and the punctured complex plane. In contrast to the previous situation, the horizontal lift of the $\mathbb{T}^2 \subset (\C^\times)^2$ action on $P^6$ does not preserve the $Spin(7)$-structure. Thus, our examples correspond to torus bundles over torus bundles, which we aptly call `nilbundles'. In particular, our examples differ from those discovered by Madsen and Swann in the context of toric $Spin(7)$ manifolds \cite{MadsenSwann} which instead have $\mathbb{T}^4$ symmetry.

Currently the most effective method of constructing non-compact $Spin(7)$ holonomy metrics involves evolving cocalibrated $G_2$-structures on some homogeneous spaces via the Hitchin flow \cite[Theorem 7]{Hitchin01stableforms}. Recently a new construction was given by Foscolo, which involves constructing $Spin(7)$ metrics on `small' circle bundles over the anti-self-dual orbibundle of  self-dual Einstein $4$-orbifolds \cite{Foscolo2019}. This is an extension of the `adiabatic limit' construction in the $G_2$ case cf. \cite{Foscolo2017}. Our result provides a new way of constructing more examples. Albeit incomplete, we nonetheless expect that the explicit nature of our metrics will be useful as testing ground for more general theories and also in future gluing constructions, similar to the one carried out in \cite{hsvz} in the hyperK\"ahler setting. The first, and simplest, example which fits into our construction was discovered by Gibbons, L\"u, Pope and Stelle (GLPS) in \cite{GibbonsKahler} by taking a `Heisenberg limit' of the Bryant-Salamon $Spin(7)$ metric on the spinor bundle of $S^4$. A different description of their example was also given in \cite{Salamon2003}. It was detailed study of this example that led to the results in this paper.\\
\textbf{Outline of paper.}

In section \ref{generalreduction} we carry out the $\mathbb{T}^2$ reduction of a torsion free $Spin(7)$-structure and describe the intrinsic torsion of the induced $SU(3)$-structure on the quotient six-manifold $P^6$, which is generally only an almost K\"ahler manifold. In section \ref{kahlerreduction} we impose that the $SU(3)$-structure is K\"ahler i.e. that the almost complex structure is in fact integrable. Assuming that $P^6$ is not a Calabi-Yau $3$-fold, we show that it is naturally equipped with Hamiltonian vector fields $U$ and $W$ which also preserve the complex structure. These vector fields can either span a line or a $2$-plane in the tangent space of $P^6$ thereby generating either an $S^1$ or $\mathbb{T}^2$ action. We consider the two cases separately: the former case being the focus of sections \ref{section4} to  \ref{nonconstant} and the latter that of sections \ref{thirdreduction} to \ref{nonconstant2} . 

In section \ref{section4}, we carry out the K\"ahler reduction to a four-manifold $M^4$ endowed with a holomorphic symplectic form. We then explain how one can invert this procedure and reconstruct $(N^8,\Phi)$ in exactly two cases:
one corresponding to the case when one of the circle bundles is trivial (which corresponds to the aforementioned Aspotolov-Salamon construction) and the second one when both circle bundles are non-trivial, see Proposition \ref{g2thm} and Corollary \ref{spin7cor}. We also explain how in this setting the local problem of finding $Spin(7)$ metrics is reduced to solving a \textit{single} second order PDE (for a $1$-parameter family of K\"ahler potentials) on an open set of $M^4 \times \R^+$. We give a general existence result in the case when we have real analytic initial data on $M^4$. In section \ref{constant} we then proceed to describe the simplest examples that can arise from our construction starting from suitable hyperK\"ahler four-manifolds. In section \ref{example1} we describe how the examples of Gibbons et al. fit into our setup and in section \ref{example2} we give new explicit examples of a $Spin(7)$ metrics. 
In section \ref{hypersurfaces} we explain how the simplest examples may also be obtained from the Hitchin flow of cocalibrated $G_2$-structures on certain nilmanifolds. In section \ref{nonconstant} we show that one can perturb the (K\"ahler potential of the) examples of section \ref{constant} to construct more complicated ones. We illustrate this construction by giving an explicit example of a $Spin(7)$ metric by perturbing the GLPS cohomogeneity one example to one which is not.

In section \ref{thirdreduction} we address the situation when the commuting vector fields $U$ and $W$ are orthogonal. We carry out once again a K\"ahler reduction but now to a complex curve $\Sigma^2$. In this case we reduce the local problem of constructing a $Spin(7)$-metric to choosing a positive harmonic function on $\Sigma^2$ and solving  a \textit{single} PDE on an open set of $\Sigma^2 \times \R$. By inverting this construction we construct more examples of $Spin(7)$ metrics in sections \ref{constant2} and \ref{nonconstant2}. We follow closely the strategy in the $\C^\times $ case. \\

\addtocontents{toc}{\protect\setcounter{tocdepth}{-1}}
\noindent\textbf{Acknowledgements.}
The author is indebted to his PhD advisors Jason Lotay and Simon Salamon for their constant support and many helpful discussions that led to this paper. The author would also like to thank Andrew Dancer and Lorenzo Foscolo for useful comments and corrections. 
This work was supported by the Engineering and Physical Sciences Research Council [EP/L015234/1]. The EPSRC Centre for Doctoral Training in Geometry and Number Theory (The London School of Geometry and Number Theory), University College London. 
\addtocontents{toc}{\protect\setcounter{tocdepth}{1}}
\section{$\mathbb{T}^2$-reduction of torsion free $Spin(7)$-structures}	\label{generalreduction}

\subsection{The basics} A torsion free $Spin(7)$-structure on an eight-manifold $N^8$ is defined by a closed $4$-form $\Phi$ which is pointwise linearly equivalent to 
\begin{align*}
\Phi_0 =\ &dx_{0123}+dx_{0145}+dx_{0167}+dx_{0246}-dx_{0257}-dx_{0347}-dx_{0356}\\
&dx_{2345}+dx_{2367}+dx_{4567}-dx_{1247}-dx_{1256}-dx_{1346}+dx_{1357},
\end{align*}
where $(x_0,\dots ,x_7)$ denote the standard coordinates on $\R^8$ and $dx_{ijlk}:=dx_i\w dx_j\w dx_k \w dx_l$. The $4$-form $\Phi$ then defines a Ricci-flat Riemannian metric $g_\Phi$, called a $Spin(7)$ metric, and volume form $vol_\Phi$ on $N^8$. Similarly a $G_2$-structure on a seven-manifold $L^7$ is defined by a $3$-form $\vp$ which is pointwise linearly equivalent to 
\[\vp_0=\partial_{x_{0}}\lrcorner\ \Phi_0, \]
with coordinates $(x_1,\dots, x_7)$ on $\R^7$. The $3$-form $\vp$ determines a metric $g_\vp$ and volume form $vol_\vp$, and hence also a Hodge star operator $*_\vp$ . We say that $\vp$ defines a torsion free $G_2$-structure if it is both closed and coclosed. The induced metric $g_\vp$, called a $G_2$ metric, is then Ricci-flat. We refer the reader to the standard references for more details on exceptional holonomy manifolds cf. \cite{Bryant1987, Joycebook, Salamon1989}. The last notion we shall require is that of an $SU(3)$-structure on a six-manifold $P^6$. This is given by an almost complex structure $J$, a real non-degenerate $2$-form $\om$ of type $(1,1)$ and a complex $3$-form $\Om=\Om^++i\Om^-$ of type $(3,0)$, satisfying the normalisation condition
\begin{equation}\frac{2}{3}\ \om^3= \Om^+\w\Om^-,\label{normalisation}\end{equation}
where we use the shorthand notation $\om^3=\om\w\om\w\om.$ If the differential forms $(\om,\Om)$ are closed then the induced metric $g_{\om}(\cdot,\cdot):=\om(\cdot,J\cdot)$ is Ricci-flat and $P^6$ is a Calabi-Yau $3$-fold. We denote by $*_\om$ the induced Hodge star operator. The theory of $SU(3)$-structures is elaborated in \cite{Bedulli2007, ChiossiSalamonIntrinsicTorsion}. 

Throughout this paper we shall use the notation $\|\cdot \|_{\om}$, $\|\cdot \|_{\vp}$ and $\|\cdot \|_{\Phi}$ to mean the pointwise norm (of vector fields and differential forms) defined by the metrics $g_\om$, $g_\vp$ and $g_\Phi$ respectively. Since our construction will be primarily local in nature any reference to the holonomy of a metric in this paper should be understood as the \textit{restricted} holonomy group.
\iffalse The terminology `$Spin(7)$-metric' will mean that the metric has holonomy \textit{equal} to $Spin(7)$ rather than just contained in it. This might differ from other conventions in the literature.(maybe say this in intro) 
\fi
\subsection{The general setup} In this paper we consider the problem of taking the quotient of a torsion free $Spin(7)$-structure $(N^8,\Phi)$ under the free action of a $2$-torus generated by $2$ \textit{orthogonal} vector fields (see remark \ref{remark2} below). Since $(N^8,\Phi)$ is Ricci-flat, the hypothesis that the action is free and preserves $\Phi$ implies that if $N^8$ is compact then it is locally the Riemannian product of a flat $\mathbb{T}^2$ and a six-manifold with holonomy contained in $SU(3)$. Thus, we shall assume that $N^8$ is non-compact, although our calculations are always valid in a small neighbourhood where such an action is free.

Denoting by $X$ and $Y$ the pair of orthogonal commuting vector fields generating this torus action, our hypothesis is that 
\[ \mathcal{L}_X\Phi=\mathcal{L}_Y\Phi=0. \]
The quotient six-manifold $P^6:=N^8/\mathbb{T}^2$ then inherits an $SU(3)$-structure. From a linear algebra point of view this follows from the fact that
\begin{gather*}
\frac{Spin(7)}{G_2}=S^7\ \ \ \text{ and }\ \ \ \frac{G_2}{SU(3)}=S^6,
\end{gather*}
whereby the $2$-plane in the tangent space of $N^8$ invariant under the $SU(3)$ action is generated by the span $\langle X,Y\rangle$. Let us denote by $(\om, \Om=\Om^++i\Om^-)$ the real $(1,1)$-form and complex $(3,0)$-form defining the induced $SU(3)$-structure on $P^6$. Our first task is to express $\Phi$ in terms of $(\om,\Om)$ and some suitable additional data. To do so we first define a $G_2$-structure on the seven-manifold $L^7$, obtained by quotienting $N^8$ by the circle action generated by the vector field $X$, by $\vp:=X\ip \Phi$. Viewing $N^8$ as an $S^1$ bundle over $L^7$ we can express $\Phi$ as
\[\Phi=\eta \w \vp + s^{4/3}*_\vp\vp,\]
where $s:=\|X\|^{-1}_\Phi=g_{\Phi}(X,X)^{-1/2}$ and $\eta(\cdot):= s^2 g_\Phi(X,\cdot)$. 
As $X$ and $Y$ commute and are orthogonal, the horizontal vector field $Y$ can also be interpreted as a vector field on $L^7$ generating an $S^1$ action. Thus, viewing $L^7$ as an $S^1$ bundle over $P^6$ we can express $\vp$ as
\[\vp=\xi \w \om + H^{3/2}\Om^+ ,\]
where $H:=\|Y\|^{-1}_\vp=g_{\vp}(Y,Y)^{-1/2}$ and $\xi(\cdot):= H^2 g_\vp(Y,\cdot)$. Geometrically, $\eta$ and $\xi$ correspond to connection $1$-forms on each $S^1$ bundle, and $s$ and $H$ are Higgs fields. This now allows us to write the $Spin(7)$ form $\Phi$ in terms of $( \om, \Om)$ as 
 \[  \Phi= \eta \w (\xi \w \om + H^{3/2}\Om^+)+s^{4/3}(\frac{1}{2}H^2 \om \w \om - H^{1/2}\xi \w \Om^-) .\]
 The above construction can be neatly summarised by the diagram:
\[ (N^8,\Phi,g_\Phi) \xrightarrow{/S^1_X} (L^7,\vp,g_\vp) \xrightarrow{/ S^1_Y} (P^6, \om, g_\om, \Om) . \]
Note that $\om$ and $\Om^+$ can equivalently be expressed as
\[ \om=Y\lrcorner \ X \lrcorner\ \Phi\ \ \ \text{ and }\ \ \ \Om^+=H^{-3/2}(X \lrcorner \ \Phi - \xi \w \om).\]
\begin{Rem}
A priori the reader might find it unnatural that we are distinguishing the vector fields $X$ and $Y$, since rather than performing a direct $\mathbb{T}^2$ reduction we are instead performing two circle quotients in succession. The advantage of this procedure of going through the intermediate $G_2$ quotient is that it makes it easier to reconcile our construction with the K\"ahler reduction of $G_2$ metrics \cite{Apostolov2003}.
\end{Rem}
The positive functions $s$ and $H$ are $\mathbb{T}^2$-invariant and as such are pullbacks of functions on $P^6$, which by abuse of notation we also denote by $s$ and $H$. The associated metrics are then related by:
\begin{align*} g_\Phi &= s^{-2}\eta^2 + s^{2/3}g_\vp,\\
g_\vp&=H^{-2}\xi^2+Hg_\om.
 \end{align*}
As the commuting vector fields $X$ and $Y$ both preserve the closed form $\Phi$ we have, using Cartan's formula,
\[ d\om=d(Y \ip X \ip \Phi)=\mathcal{L}_Y(X \ip \Phi)-Y \ip\mathcal{L}_X \Phi=X \ip \mathcal{L}_Y \Phi=0.\]
Furthermore, the condition $d\Phi=0$ implies that
\begin{gather}
d\Om^+=-\frac{3}{2}d(\ln H) \w \Om^+ - H^{-3/2}d\xi \w \om,\label{1condition}\\
d\Om^-=-(\frac{4}{3}d^c(\ln s) +\frac{1}{2}d^c(\ln H) )\w \Om^+ - s^{-4/3}H^{-1/2}d\eta \w \om,\label{2condition}\\
H^{3/2}d\eta \w \Om^++\frac{1}{2}d(H^2 s^{4/3}) \w\om^2 - s^{4/3}H^{1/3} d\xi \w\Om^-=0,\label{3condition}
\end{gather}
where $d^c:=J\circ d$ and $J$ is the almost complex structure on $P^6$ determined by $\Om$. Here we follow the convention that $J$ acts on a $1$-form $\beta$ by $J\beta(\cdot)=\beta(J \cdot)$, which differs from the convention in \cite{Apostolov2003} by a minus sign.

Note in particular that since $X$ preserves $\Phi$ it also follows that $\vp$ is a closed $G_2$ $3$-form. Moreover, assuming $s$ is not constant, from \cite[Theorem 3.6]{UdhavFowdar} we also know that $\vp$ is also coclosed, hence torsion free, only if $g_\Phi$ has holonomy contained in $SU(4)$.

From equations (\ref{1condition}) and (\ref{2condition}) it follows that $d\eta$ and $d\xi$ have no $\om$-component and  thus, $d\eta, d\xi \in \Lm^2_6 \oplus \Lm^2_8$, where $\Lm^2_6:=[\![\Lm^{2,0}]\!]$ and $\Lm^2_8:=[\Lm^{1,1}_0]$ following the notation of \cite{Bedulli2007}. We may then write
\begin{gather*}
d\eta \w\om = \al_\eta \w \Om^++(d\eta)^2_8 \w \om,\\
d\xi \w \om =\al_\xi \w \Om^++(d\xi)^2_8 \w\om,
\end{gather*}
for $1$-forms $\al_\eta$ and $\al_\xi$ on $P^6$, and with $(d\eta)^2_8$ and $(d\xi)^2_8$ denoting the $\Lm^2_8$-components of $d\eta$ and $d \xi $ respectively. Condition (\ref{3condition}) can then equivalently be expressed as
\[J(\al_\eta)-s^{4/3}H^{-1}\al_\xi=\frac{1}{2}H^{-3/2}d(H^2 s^{4/3}). \]
From the theory of $SU(3)$-structures cf. \cite{Bedulli2007,ChiossiSalamonIntrinsicTorsion} we can decompose the system (\ref{1condition}), (\ref{2condition}) into irreducible $SU(3)$-modules and express the $1$-forms $\al_\xi$ and $\al_\eta$ only in terms of $s$ and $H$. The result of this calculation is summed up in the following lemma:
\begin{Lemma}\label{torsionlemma}
The condition $d\Phi=0$ is equivalent to $d\om=0$ and the system
\begin{align*}
d\Om^+&= d(\ln (H^{-1/2} s^{-1/3}))\w \Om^+-H^{-3/2}(d\xi)^2_8 \w \om,\\
d\Om^-&= d^c(\ln (H^{-1/2} s^{-1/3}))\w \Om^+-s^{-4/3}H^{-1/2}(d\eta)^2_8 \w \om,
\end{align*}
with 
\[J(\al_\eta)=H^{1/2}s^{1/3}ds\ \ \ \text{ and } \ \ \
\al_\xi=-H^{1/2}dH+\frac{1}{3}H^{3/2}s^{-1}ds.\]
\end{Lemma}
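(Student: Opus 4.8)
The plan is to expand the closedness conditions (\ref{1condition})--(\ref{3condition}) through the Chiossi--Salamon classification of $SU(3)$-structures, which is already available from the cited references. Recall that the intrinsic torsion of $(\om,\Om)$ lies in a sum of five modules $\mathcal{W}_1,\dots,\mathcal{W}_5$, with $\mathcal{W}_1$ a pair of scalars, $\mathcal{W}_2$ a pair of primitive $(1,1)$-forms, $\mathcal{W}_4\cong\mathcal{W}_5\cong\Lm^1$, and $\mathcal{W}_3$ the remaining piece; moreover $d\om$ only detects $\mathcal{W}_1\oplus\mathcal{W}_3\oplus\mathcal{W}_4$, while $d\Om^\pm$ only detect $\mathcal{W}_1\oplus\mathcal{W}_2\oplus\mathcal{W}_5$. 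Since we assume $d\om=0$, the classes $\mathcal{W}_1,\mathcal{W}_3,\mathcal{W}_4$ all vanish; in particular no multiple of $\om^2$ can occur in $d\Om^\pm$, and the structure equations collapse to
\[ d\Om^+ = w\w\Om^+ + \gamma^+\w\om, \qquad d\Om^- = (Jw)\w\Om^+ + \gamma^-\w\om, \]
for a single real $1$-form $w$ (the surviving $\mathcal{W}_5$-torsion) and two primitive $(1,1)$-forms $\gamma^\pm\in\Lm^2_8$ (the $\mathcal{W}_2$-torsion). The one point here that is not bookkeeping is that the coefficient of $\Om^+$ in the second equation is forced to equal $Jw$, not an independent $1$-form; this follows from the pointwise identities $\be\w\Om^- = (J\be)\w\Om^+$ and $(J\be)\w\Om^- = -\be\w\Om^+$, valid for every $1$-form $\be$ because $\Om$ has type $(3,0)$, and the same identities show that $\Lm^1\w\Om^+$ already exhausts the $[\Lm^{3,1}]$-part of a $4$-form.

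Next I would substitute the decompositions $d\xi\w\om = \al_\xi\w\Om^+ + (d\xi)^2_8\w\om$ and $d\eta\w\om = \al_\eta\w\Om^+ + (d\eta)^2_8\w\om$, already recorded in the excerpt, into (\ref{1condition}) and (\ref{2condition}). Matching the $\Lm^2_8\w\om$-components against the reduced structure equations gives at once $\gamma^+ = -H^{-3/2}(d\xi)^2_8$ and $\gamma^- = -s^{-4/3}H^{-1/2}(d\eta)^2_8$, which are exactly the two displayed formulas of the lemma once $w$ is identified. Matching instead the $\Om^+$-components expresses the $\mathcal{W}_5$-torsion in two ways:
\[ w = -\tfrac{3}{2}\,d(\ln H) - H^{-3/2}\al_\xi, \qquad Jw = -\tfrac{4}{3}\,d^c(\ln s) - \tfrac{1}{2}\,d^c(\ln H) - s^{-4/3}H^{-1/2}\al_\eta. \]

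It then remains to combine these with the reformulation of (\ref{3condition}) recorded just before the lemma, namely $J\al_\eta - s^{4/3}H^{-1}\al_\xi = \tfrac{1}{2}H^{-3/2}d(H^2 s^{4/3})$, and to solve for $\al_\eta$ and $\al_\xi$ in terms of $s$ and $H$. Applying $J$ to the second displayed relation above and using $J\circ d^c = -d$ and $J^2=-1$ on $1$-forms turns the three equations into a linear system in $\al_\eta,\al_\xi$; eliminating $\al_\eta$ via the reformulated (\ref{3condition}) and expanding $d(H^2 s^{4/3}) = 2H s^{4/3}\,dH + \tfrac{4}{3}H^2 s^{1/3}\,ds$ yields $\al_\xi = -H^{1/2}\,dH + \tfrac{1}{3}H^{3/2}s^{-1}\,ds$, and back-substitution then gives $J\al_\eta = H^{1/2}s^{1/3}\,ds$. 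Feeding $\al_\xi$ into the first displayed relation finally produces $w = -\tfrac{1}{2}\,d(\ln H) - \tfrac{1}{3}\,d(\ln s) = d(\ln(H^{-1/2}s^{-1/3}))$, which closes the argument.

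The main obstacle is the first step: one must fix orientation and almost-complex-structure conventions and pin down every numerical coefficient in the reduced $SU(3)$ structure equations, since normalisations of $SU(3)$-torsion differ between references; in particular one should check carefully that $d\om=0$ genuinely kills $\mathcal{W}_1$, so that no $\om^2$-term can appear in $d\Om^\pm$, and that the $\mathcal{W}_5$-torsion really enters $d\Om^-$ as $Jw$ and not as $w$. Everything downstream is linear algebra with $1$-forms together with routine manipulation of the fractional powers of $s$ and $H$.
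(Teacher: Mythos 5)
Your proposal is correct and is essentially the paper's own (unwritten) argument: the paper proves this lemma only by citing the Chiossi--Salamon/Bedulli--Vezzoni module decomposition to reduce (\ref{1condition})--(\ref{2condition}) to the form $d\Om^\pm=(\cdot)\w\Om^+ + \gamma^\pm\w\om$ and then combining with the reformulated (\ref{3condition}) to solve for $\al_\xi$ and $\al_\eta$, which is precisely your matching-and-elimination, and your final expressions $\al_\xi=-H^{1/2}dH+\tfrac13 H^{3/2}s^{-1}ds$, $J\al_\eta=H^{1/2}s^{1/3}ds$ and $w=d(\ln(H^{-1/2}s^{-1/3}))$ all check out. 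The only nuance is attribution: the identities $\be\w\Om^-=(J\be)\w\Om^+$ show that $\Lm^1\w\Om^+$ exhausts the relevant $4$-form component, but the fact that the $\Om^+$-coefficient of $d\Om^-$ is $Jw$ rather than an independent $1$-form is the multiplicity-one occurrence of the $\mathcal{W}_5$ class in the Chiossi--Salamon structure equations (together with $d\om=0$ killing $\mathcal{W}_1$), which you do invoke at the outset, so this is not a gap.
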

\noindent In the notation of \cite{Bedulli2007}, the non-zero terms of the above $SU(3)$ decomposition are given by:
\begin{align*}
 \pi_1&=d(\ln(H^{-1/2} s^{-1/3})),\\
 \pi_2&=H^{-3/2}(d\xi)^2_8,\\
 \sigma_2&=s^{-4/3}H^{-1/2}(d\eta)^2_8.
\end{align*}
These differential forms define the intrinsic torsion of the $SU(3)$-structure $(\om,\Om)$ i.e. they measure the failure of the holonomy group to reduce to (a subgroup of) $SU(3)$ cf. \cite{Bryant1987, Salamon1989}. Similarly to the Gray-Hervella decomposition \cite{GrayHervella} one can define different classes of $SU(3)$-structures by imposing the vanishing of various combinations of these forms. In particular, we have the following interesting classes:
\begin{enumerate}
\item  Calabi-Yau (CY) i.e. $\pi_1=0$ and $\pi_2=\sigma_2=0$
\item  K\"ahler i.e. $\pi_2=\sigma_2=0$
\item  Special Generalised Calabi-Yau i.e. $\pi_1=0$ and $\pi_2=0$
\end{enumerate}
In this paper we shall be primarily interested in the K\"ahler case, but before proceeding ahead we make the following important observation.
\begin{Prop}
If $s$ is constant then $(L^7,\vp)$ has holonomy contained in $G_2$ and $(N^8,\Phi)$ is (locally) the Riemannian product of $L^7$ and $S^1$. If furthermore, $H$ is also constant then $(P^6,\om,\Om)$ has holonomy contained in $SU(3)$ and $(N^8,\Phi)$ is (locally) the Riemannian product of $P^6$ and a flat $2$-torus. Hence $\xi$ and $\eta$ cannot both be Hermitian Yang-Mills connections if $(N^8,\Phi)$ has holonomy $Spin(7)$.
\end{Prop}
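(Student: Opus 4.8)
I would prove this with essentially one analytic ingredient, the Bochner identity for Killing fields, the rest being bookkeeping with the conformal factors appearing in $g_\Phi=s^{-2}\eta^2+s^{2/3}(H^{-2}\xi^2+Hg_\om)$ and with the chain of pointwise stabilisers $Spin(7)\supset G_2\supset SU(3)$.

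\emph{Step 1 ($s$ constant).} Since $\mathcal L_X\Phi=0$ and $\Phi$ determines $g_\Phi$, the field $X$ is Killing for the Ricci-flat metric $g_\Phi$. I would then invoke the Bochner identity for a Killing field, $\tfrac12\Delta|X|_\Phi^2=\Ric_\Phi(X,X)-|\nabla^{g_\Phi}X|^2$, which here reads $\tfrac12\Delta|X|_\Phi^2=-|\nabla^{g_\Phi}X|^2$; as $|X|_\Phi=s^{-1}$ is constant the left-hand side vanishes, forcing $\nabla^{g_\Phi}X\equiv 0$, so $X$ is parallel. Hence $\eta=s^2X^\flat$ is a parallel $1$-form (of constant norm $s$), the distribution $\ker\eta$ is parallel and integrable, and $N$ is — at least locally, and globally whenever the $S^1_X$-bundle $N\to L$ is trivialised — the Riemannian product $L^7\times S^1$, with $\vp=\iota_X\Phi$ descending to $L$. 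Equivalently, $\mathrm{Hol}(g_\Phi)$ fixes the parallel vector $X$, hence lies in the stabiliser of a nonzero vector, which by $Spin(7)/G_2=S^7$ is $G_2$; restricting to $L$ gives $\mathrm{Hol}(g_\vp)\subseteq G_2$, i.e. $\vp$ is torsion free.

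\emph{Step 2 ($H$ also constant).} Because $Y$ is $\mathbb{T}^2$-invariant, commutes with $X$, and is $g_\Phi$-orthogonal to $X$, it descends to a vector field $\bar Y$ on $L$; since $s$ is constant and $g_\Phi|_{\ker\eta}=s^{2/3}g_\vp$, this $\bar Y$ is Killing for $g_\vp$, of constant norm $H^{-1}$. As $g_\vp$ is now Ricci-flat by Step 1, the same Bochner argument on $(L^7,g_\vp)$ forces $\nabla^{g_\vp}\bar Y\equiv 0$. Iterating the splitting, $L=P^6\times S^1$ and $N=P^6\times\mathbb{T}^2$ as a Riemannian product; $\mathrm{Hol}(g_\Phi)$ now fixes two orthonormal vectors, so by $G_2/SU(3)=S^6$ it is contained in $SU(3)$, and the $SU(3)$-structure $(\om,\Om)$ on $P^6$ is torsion free.

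\emph{Step 3 (the Hermitian--Yang--Mills conclusion).} From the discussion preceding Lemma \ref{torsionlemma}, $d\eta,d\xi\in\Lm^2_6\oplus\Lm^2_8$; in particular their $\Lm^2_1$-parts (the $\om$-components) vanish and $d\eta\w\om^2=d\xi\w\om^2=0$ automatically. Recalling that a connection on a circle bundle is Hermitian--Yang--Mills (w.r.t. $J$ on $P^6$) exactly when its curvature is a primitive $(1,1)$-form, I would conclude that $\eta$ (resp. $\xi$) is Hermitian--Yang--Mills iff its $\Lm^2_6$-part vanishes. By the definition of $\al_\eta,\al_\xi$ through $d\eta\w\om=\al_\eta\w\Om^++(d\eta)^2_8\w\om$ and $d\xi\w\om=\al_\xi\w\Om^++(d\xi)^2_8\w\om$, together with the formulas $J(\al_\eta)=H^{1/2}s^{1/3}ds$ and $\al_\xi=-H^{1/2}dH+\tfrac13H^{3/2}s^{-1}ds$ of Lemma \ref{torsionlemma}, this holds for $\eta$ iff $ds=0$ and, given $ds=0$, for $\xi$ iff $dH=0$. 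Hence if both $\eta$ and $\xi$ were Hermitian--Yang--Mills then $s$ and $H$ would both be constant, and by Steps 1--2 $\,(N^8,\Phi)$ would be the Riemannian product $P^6\times\mathbb{T}^2$ with $\mathrm{Hol}(g_\Phi)\subseteq SU(3)\subsetneq Spin(7)$, contradicting the hypothesis that $g_\Phi$ has holonomy $Spin(7)$.

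\emph{Main obstacle.} None of the steps is deep; the only genuine input is the Bochner identity, which crucially uses Ricci-flatness. The points that need care are the passage between the three metrics and their Levi-Civita connections through the constant conformal factors, the precise (local versus global) form of the product splitting, and, for the last part, matching the differential-geometric definition of a Hermitian--Yang--Mills connection with the $SU(3)$-module description of $d\eta,d\xi$.
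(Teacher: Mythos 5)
Your proposal is correct, and your Step 3 is essentially the paper's own argument (the paper disposes of the last assertion by saying it ``follows directly from Lemma \ref{torsionlemma}'': both HYM conditions force $\al_\eta=\al_\xi=0$, hence $ds=dH=0$, and then the first two parts give holonomy inside $SU(3)$). For the first two claims, however, you take a genuinely different route. The paper never leaves the first-order $SU(3)$-structure formalism: $s$ constant gives $\al_\eta=0$ via Lemma \ref{torsionlemma}, so $d\eta\in\Lm^2_8$; differentiating the resulting identity $d\eta\w\Om^-=0$ and substituting the expression for $d\Om^-$ from the same lemma leaves only the term $d\eta\w(d\eta)^2_8\w\om$, which is $-\|d\eta\|^2_\om$ times the volume form, so $d\eta=0$; the flat connection has trivial class $[d\eta]\in H^2(L,\mathbb{Z})$ and the splitting follows, and when $H$ is also constant the identical computation applies to $d\xi$. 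You instead apply the Bochner identity to the Killing fields of constant norm on the Ricci-flat metrics $g_\Phi$ and then $g_\vp$ to produce parallel vector fields, and read off the holonomy reductions from the stabiliser chain $Spin(7)\supset G_2\supset SU(3)$. Your argument is pointwise, so the absence of compactness is harmless, and it is sound; it buys generality (only Ricci-flatness is used, not the structure equations) and yields the holonomy containments immediately, at the cost of a second-order input and the bookkeeping between $g_\Phi$, $g_\vp$ and the constant conformal factors, whereas the paper's computation produces the flatness $d\eta=0$ directly, which is exactly what identifies the circle bundle. On the global product statement, your hedge (local splitting, global under a suitable trivialisation) is at least as careful as the paper's one-line deduction from $[d\eta]=0$, since a flat connection on a topologically trivial circle bundle may still carry a closed, non-exact horizontal part.
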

\begin{proof}
If $s$ is constant then $d\eta \in \Lm^2_8$.  By differentiating the relation 
\[ d\eta \w \Om^-=0   \] 
we get that $\|d\eta\|_\om=0$. It follows that $[d\eta]$ defines a trivial class in $H^2(L^7,\mathbb{Z})$ and this proves the first claim. If $H$ is also constant we can apply the same argument to $d\xi$. The last assertion now follows directly from Lemma \ref{torsionlemma}.
\end{proof}
\begin{Rem}\label{remark2}\ \\
\textbf{1.} When the hypothesis that the vector fields $X$ and $Y$ are orthogonal is dropped the expressions appearing  in this section become more complicated. Furthermore, by imposing the K\"ahler assumption on the resulting $SU(3)$-structure we have only been able so far to find metrics with holonomy \textit{strictly} contained in $Spin(7)$. So we have decided to omit this more general setting in the present work to avoid introducing unnecessary complications. \\
\textbf{2.} Our construction also includes the $\mathbb{T}^2$ quotient of hyperK\"ahler eight-manifolds and CY $4$-folds under the group inclusions: $Sp(2)\subset SU(4) \subset Spin(7)$. As differential forms these can be expressed as
\begin{align}
\Phi 
&=\frac{1}{2}(\om_I \w \om_I+\om_J \w \om_J-\om_K \w \om_K)\\
&=\frac{1}{2}(\hat{\om}\w \hat{\om}) + Re(\hat{\Om}),\label{SU4spin7}
\end{align}
where $(\om_I,\om_J,\om_K)$ defines the hyperK\"ahler triple and $(\hat{\om},\hat{\Om})$ denotes the symplectic and holomorphic $(4,0)$-form of the CY $4$-fold.
Note that even if $N^8$ is a hyperK\"ahler manifold it is not generally the case that the quotient $SU(3)$-structure is torsion free. For instance, in \cite{UdhavFowdar} we considered the $\mathbb{T}^2$-quotient, generated by right and left multiplication by an imaginary quaternion, of (an open set in) $\mathbb{R}^8\cong \Q^2$ with the flat $Spin(7)$-structure and found that the quotient $SU(3)$-structure has all of the torsion forms $\pi_1$, $\pi_2$ and $\sigma_2$ non-zero.
\end{Rem}

\section{The K\"ahler reduction}\label{kahlerreduction}

\subsection{The first reduction} We shall now impose that $J$ is an integrable almost complex structure so that $(P^6,\om,J)$ is a K\"ahler manifold. This implies that $\pi_2=\sigma_2=0$ i.e. $d\eta, d\xi \in \Lm^2_6=[\Lm^{2,0}]$. Thus, by Lemma \ref{torsionlemma} we have that
\begin{align}
d\Om^+&= d(\ln (H^{-1/2} s^{-1/3}))\w \Om^+\label{kahler1}\\
d\Om^-&= d^c(\ln (H^{-1/2} s^{-1/3}))\w \Om^+\label{kahler2}
\end{align}
with 
\[J(\al_\eta)=H^{1/2}s^{1/3}ds\ \ \ \text{ and } \ \ \
\al_\xi=-H^{1/2}dH+\frac{1}{3}H^{3/2}s^{-1}ds\]
satisfying 
\[ [*_\om(\al_\eta \w \Om^+)] , [*_\om(\al_\xi \w \Om^+)]  \in H^{2}(P^6,\mathbb{Z}).\]
Note that the latter requirement is the natural higher dimensional analogue of the integrality-harmonic condition that figures in the Gibbons-Hawking ansatz, see section \ref{GibbonsHawkingAnsatz} below for comparison.
\begin{Rem}\
Since $$d(H^{1/2}s^{1/3}\Om)=0$$
i.e. $H^{1/2}s^{1/3}\Om$ is a \textit{holomorphic} $(3,0)$-form, it follows that the Ricci form of $(P^6,\om, \Om)$ is given by
\begin{align*} \rho&=i\partial \bar{\partial}(\ln (Hs^{2/3}))\\
&= i\partial \bar{\partial}(\ln H)+\frac{2}{3}i\partial \bar{\partial}(\ln s).
\end{align*}
and the scalar curvature is
\[ {S}=-d^{*}d(\ln(Hs^{2/3})), \]
where $d^*$ denotes the codifferential on $P^6$, cf. \cite[Pg. 158]{KobayashiNomizu2}.
\end{Rem}
\begin{Prop}\label{AS}
The intrinsic torsion $\tau$ of the closed $G_2$-structure $\vp$, defined by $d*_\vp\vp=\tau\w \vp$, is given by
\begin{align*}
\tau&=*_\om(\frac{1}{3}H^{1/2}s^{-1}d^cs\w\Om^+)-\frac{2}{3}H^{-1}s^{-1}\xi \w d^cs \\
&=-\frac{1}{3}s^{-4/3}d\eta -\frac{2}{3}H^{-1}s^{-1}\xi \w d^c s.
\end{align*}
\end{Prop}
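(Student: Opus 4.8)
The plan is to compute $d*_\vp\vp$ directly from the dictionary already set up in the excerpt, expressing everything in terms of the $SU(3)$-data $(\om,\Om)$ on $P^6$, the connection $1$-forms $\xi,\eta$, and the functions $s,H$. First I would write down $*_\vp\vp$ in terms of the $SU(3)$-structure: since $\vp=\iota_X\Phi$ is the $G_2$-structure on $L^7$ and $g_\vp=H^{-2}\xi^2+Hg_\om$, one has the standard decomposition $\vp=\xi\w\om+H^{3/2}\Om^+$ and $*_\vp\vp=\tfrac12 H^2\,\om^2 - H^{1/2}\,\xi\w\Om^-$, which can be read off directly from the given formula for $\Phi$ (it is precisely the coefficient of $\eta$ in $\Phi$ after the substitution, modulo the powers of $s$). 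The key identities to keep at hand are the compatibility relation $\tfrac23\om^3=\Om^+\w\Om^-$, the Kähler condition $d\om=0$, and equations \eqref{kahler1}–\eqref{kahler2} for $d\Om^\pm$, together with $d\xi\in[\Lm^{2,0}]$ and the formula $d\xi\w\om=\al_\xi\w\Om^+$ defining $\al_\xi$.

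Next I would differentiate. Using $d\om=0$ and \eqref{kahler2}, $d*_\vp\vp = d(\tfrac12 H^2)\w\om^2 - d(H^{1/2})\w\xi\w\Om^- - H^{1/2}\,d\xi\w\Om^- + H^{1/2}\,\xi\w d\Om^-$. The term $H^{1/2}\xi\w d\Om^-$ is handled by \eqref{kahler2}, and $d\xi\w\Om^-$ vanishes or simplifies because $d\xi\in[\Lm^{2,0}]$ so $d\xi\w\Om^-$ is a $(3,1)$-form wedged appropriately — more precisely one uses that for $\beta\in[\Lm^{2,0}]$, $\beta\w\Om^-$ is determined by the contraction of $\beta$ with $\Om^+$, which ties back to $\al_\xi$. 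Then I would regroup terms so that each appears as something wedged with $\vp=\xi\w\om+H^{3/2}\Om^+$; this is where the $SU(3)$ linear algebra (the relations among $\om^2$, $\xi\w\Om^\pm$, and wedges with $1$-forms) does the real work. Comparing with $\tau\w\vp$ isolates $\tau$, and substituting $\al_\xi=-H^{1/2}dH+\tfrac13 H^{3/2}s^{-1}ds$ from Lemma \ref{torsionlemma} (equivalently its Kähler specialization) produces the stated $d^cs$ expression; the second displayed form then follows from $d\eta\in[\Lm^{2,0}]$ and $d\eta\w\om=\al_\eta\w\Om^+$ with $J(\al_\eta)=H^{1/2}s^{1/3}ds$, i.e. $\al_\eta=-H^{1/2}s^{1/3}d^cs$, so that $*_\om(\tfrac13 H^{1/2}s^{-1}d^cs\w\Om^+)=-\tfrac13 s^{-4/3}d\eta$.

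The main obstacle I anticipate is purely bookkeeping: correctly tracking the fractional powers of $s$ and $H$ and the $SU(3)$ Hodge-star identities (e.g. $*_\om(\al\w\Om^+)$ for a $1$-form $\al$, and the sign conventions for $d^c=J\circ d$ with the author's convention $J\beta(\cdot)=\beta(J\cdot)$, which differs by a sign from \cite{Apostolov2003}). In particular one must be careful that $*_\vp$ on $L^7$ decomposes compatibly with $*_\om$ on $P^6$ under $g_\vp=H^{-2}\xi^2+Hg_\om$, which introduces extra powers of $H$ when passing a star through the splitting. Once those conventions are pinned down, the computation is a finite unwinding; the two displayed expressions for $\tau$ are then just two ways of writing the $\Lm^2_8$-component $d\eta$, via $d\eta=-s^{4/3}*_\om(\al_\eta\w\Om^+)$ and $\al_\eta=-H^{1/2}s^{1/3}d^cs$.
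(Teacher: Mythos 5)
Your plan is essentially the paper's (implicit) argument: Proposition \ref{AS} is stated there without a written proof, being exactly the direct computation of $d*_\vp\vp=d(\tfrac12 H^2\om^2-H^{1/2}\xi\w\Om^-)$ using $d\om=0$, \eqref{kahler1}--\eqref{kahler2} and the formulas for $\al_\xi,\al_\eta$ from Lemma \ref{torsionlemma}, followed by regrouping against $\vp=\xi\w\om+H^{3/2}\Om^+$ (the key simplifications being $d\xi\w\Om^-=-\al_\xi\w\om^2$, the cancellation coming from $d^cH\w\Om^+=dH\w\Om^-$, and $*_\om(d^cs)=\tfrac12 ds\w\om^2$), so your route is the same and it does go through. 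Two small corrections: in the K\"ahler case $d\eta\w\om=\al_\eta\w\Om^+$ gives $d\eta=*_\om(\al_\eta\w\Om^+)$ with no factor $-s^{4/3}$ (your stated intermediate formula would contradict your own, correct, conclusion $*_\om(\tfrac13 H^{1/2}s^{-1}d^cs\w\Om^+)=-\tfrac13 s^{-4/3}d\eta$), and $d\eta$ lies in $\Lm^2_6=[\Lm^{2,0}]$, not $\Lm^2_8$, as you in fact say earlier in the proposal.
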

\noindent Thus, it follows that the Apostolov-Salamon construction \cite{Apostolov2003}, which considers the K\"ahler $S^1$ reduction of torsion free $G_2$-structures, corresponds to the case when the first circle bundle is just a trivial bundle i.e. $\al_\eta=0$, or equivalently $d\eta=0$ or $s$ is constant (which by rescaling we can assume is $1$). In our notation Proposition $1$ of  \cite{Apostolov2003} can be stated as follows:
\begin{Prop}\label{g2thm}
Given a K\"ahler 6-manifold $(P^6,\om,J)$ with an $SU(3)$-structure determined by the $(3,0)$-form $\Om=\Om^+ + i\Om^-$ and a positive function $H$ such that 
\[ d(H^{1/2} \Om^+ )=0 \]
and 
\begin{equation}\label{topo2}  [-*_\om(\frac{2}{3} d(H^{3/2}) \w \Om^+  )] \in H^{2}(P,\mathbb{Z})   ,\end{equation}
then 
\[ \vp:= \xi \w \om + H^{3/2} \Om^+ \]
defines a torsion free $G_2$-structure on the $S^1$-bundle determined by (\ref{topo2}), where $\xi$ is a connection $1$-form on the circle bundle satisfying 
\[d\xi = -*_\om(\frac{2}{3} d(H^{3/2}) \w \Om^+  ) .\]
Moreover, the Hamiltonian vector field corresponding to $-H$ also preserves $\Om$, and hence $J$, and thus one can perform a K\"ahler reduction to a four-manifold endowed with a holomorphic symplectic structure.
\end{Prop}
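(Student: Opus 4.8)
The statement to prove is Proposition \ref{g2thm}, the Apostolov--Salamon result phrased in the present notation. The cleanest route is to \emph{specialise} the general reduction machinery already established in the excerpt rather than to reprove it from scratch. The plan is to set $s\equiv 1$ in Lemma \ref{torsionlemma} and its K\"ahler refinement, equations (\ref{kahler1})--(\ref{kahler2}), and check that the stated $\vp=\xi\w\om+H^{3/2}\Om^+$ is precisely $\iota_X\Phi$ for the $Spin(7)$-form $\Phi$ of the general setup, so that torsion-freeness of $\vp$ is read off directly. First I would observe that with $s$ constant, Lemma \ref{torsionlemma} gives $\al_\eta=0$, so indeed $d\eta=0$ and the $X$-bundle is (locally) trivial; we may as well drop $\eta$ and work on $L^7$ from the start, with $\vp=\xi\w\om+H^{3/2}\Om^+$ and $*_\vp\vp=\tfrac12\om^2\w\text{(something)}+\ldots$ computed from the compatibility relation $\tfrac23\om^3=\Om^+\w\Om^-$. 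The point is that $d\vp=0$ becomes, by (\ref{kahler1}) with $s=1$, the two requirements $d\om=0$ (automatic, $\om$ is the K\"ahler form) and
\[
d\big(H^{3/2}\Om^+\big)+d\xi\w\om=0,
\]
which, decomposing $d\xi\in[\Lm^{2,0}]$ and writing $d\xi\w\om=\al_\xi\w\Om^+$, is equivalent to $d(H^{3/2}\Om^+)=-\al_\xi\w\Om^+$ together with the defining equation for $\al_\xi$. Since $d(H^{1/2}\Om^+)=0$ is hypothesised, one has $d(H^{3/2}\Om^+)=d(H\cdot H^{1/2}\Om^+)=dH\w H^{1/2}\Om^+$, and matching the $[\Lm^{2,0}]$-parts pins down $\al_\xi$ and hence $d\xi=-*_\om\big(\tfrac23 d(H^{3/2})\w\Om^+\big)$, exactly the stated connection equation; the cohomological condition (\ref{topo2}) is exactly what is needed for such a connection $1$-form $\xi$ to exist on a genuine circle bundle (Chern--Weil).

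Next I would verify co-closedness, $d*_\vp\vp=0$. Here I would invoke Proposition \ref{AS}: with $s$ constant the intrinsic torsion $\tau$ of the closed $G_2$-structure $\vp$ becomes $\tau=-\tfrac13 s^{-4/3}d\eta-\tfrac23 H^{-1}s^{-1}\xi\w d^cs=0$, since $d\eta=0$ and $ds=0$. As $d*_\vp\vp=\tau\w\vp$, this gives $d*_\vp\vp=0$ immediately, so $\vp$ is torsion free and $g_\vp$ has holonomy in $G_2$. (If one prefers a self-contained argument not routing through Prop.~\ref{AS}, one computes $*_\vp\vp=\xi\w\Om^-+\tfrac12 H^2\om^2$ up to the powers of $H$ dictated by the metric $g_\vp=H^{-2}\xi^2+Hg_\om$, namely $*_\vp\vp = H^{1/2}\,\xi\w\Om^- + \tfrac12 H^{2}\,\om^2$, and differentiates using $d\om=0$, the formula for $d\Om^-$ from (\ref{kahler2}) with $s=1$, and the already-derived $d\xi$; the $\om^2$ and $\xi\w(\cdot)$ terms cancel by the same relation between $\al_\xi$ and $dH$.)

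Finally, for the last sentence — that the Hamiltonian vector field of $-H$ preserves $\Om$ and hence $J$, enabling a K\"ahler reduction to a holomorphic-symplectic $4$-manifold — I would argue as follows. Let $Z$ be the vector field with $\iota_Z\om=-dH$ (this exists and is unique since $\om$ is symplectic); it is Hamiltonian by construction, so $\mathcal{L}_Z\om=0$. It remains to show $\mathcal{L}_Z\Om=0$. Using Cartan's formula and $d(H^{1/2}\Om^+)=0$, i.e. $d\Om^+=-\tfrac12 d(\ln H)\w\Om^+$, one computes $\mathcal{L}_Z\Om^+=\iota_Z d\Om^++d\iota_Z\Om^+$; the first term is $-\tfrac12 H^{-1}(\iota_Z dH)\Om^+ + \tfrac12 H^{-1}dH\w\iota_Z\Om^+$, and since $\iota_Z dH=\om(Z,Z)\cdot$(contraction)$=0$ automatically ($\iota_Z\iota_Z\om=0$), this reduces to an exact-type expression that cancels against $d\iota_Z\Om^+$ after using the type decomposition and $dH=-\iota_Z\om$; the upshot is that $Z$ is real-holomorphic, so $\mathcal{L}_ZJ=0$. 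Then the standard K\"ahler-reduction theorem applies: the symplectic quotient $M^4 = H^{-1}(c)/S^1_Z$ inherits a K\"ahler structure, and the two remaining components $\om_2+i\om_3$ of a suitable repackaging of $(\om,\Om)$ descend to a closed $(2,0)$-form on $M^4$ that is non-degenerate, i.e. a holomorphic symplectic form; this is where the ``trivial canonical bundle'' of the abstract mirrors the holomorphic volume form $H^{1/2}\Om$.

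\textbf{Main obstacle.} The genuinely delicate point is not the differential-form bookkeeping — that is forced once $s$ is set to $1$ — but rather checking that the Hamiltonian flow of $-H$ is holomorphic, i.e.\ $\mathcal{L}_ZJ=0$, cleanly from $d(H^{1/2}\Om^+)=0$ alone, and then that the descended $(2,0)$-form on the $4$-dimensional quotient is \emph{non-degenerate} (so that ``holomorphic symplectic'' is literally true rather than merely ``holomorphic $(2,0)$''). Non-degeneracy should follow from the compatibility relation $\tfrac23\om^3=\Om^+\w\Om^-$ restricted to the $Z$-horizontal distribution, but making that argument watertight — in particular handling the locus where $dH$ vanishes, and the possibility of non-free $S^1$-action forcing passage to a finite cover (which is why the paper writes ``up to finite covers'') — is the part that requires care. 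Everything else is a specialisation of Lemma \ref{torsionlemma} and Proposition \ref{AS} together with the classical symplectic/K\"ahler reduction theorem.
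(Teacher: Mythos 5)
Your main construction argument is essentially the paper's own route: Proposition \ref{g2thm} is treated there as the $s\equiv 1$ specialisation of (\ref{kahler1})--(\ref{kahler2}), exactly as Proposition \ref{spin7thm} is proved, and your derivation of $d\xi$ and of closedness of $\vp$ is correct. Two caveats on the coclosedness step, though. Invoking Proposition \ref{AS} is circular in this converse direction: that formula for $\tau$ is derived under the standing assumption that the torsion free $Spin(7)$-structure (hence $\eta$ and $s$) already exists upstairs, which is what is being constructed. Your self-contained alternative is the right fix, but then you must justify why (\ref{kahler2}) is available: it is, because integrability of $J$ together with $d(H^{1/2}\Om^+)=0$ forces $H^{1/2}\Om$ to be a closed holomorphic $(3,0)$-form (the Remark after (\ref{kahler2})), hence $d(H^{1/2}\Om^-)=0$ as well; this small argument should be made explicit. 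Also note the sign convention: $*_\vp\vp=\tfrac12 H^2\om^2 - H^{1/2}\xi\w\Om^-$ in the paper's normalisation.

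The genuine gap is in the ``Moreover'' clause, which you yourself flag as the delicate point but do not close. Your computation correctly reduces $\mathcal{L}_Z\Om^+$ (using $ZH=\om(Z,Z)=0$) to $H^{-1/2}\,d\bigl(\iota_Z(H^{1/2}\Om^+)\bigr)$, but the asserted cancellation ``by type decomposition'' does not exist: what is needed is precisely that the $2$-form $\iota_Z(H^{1/2}\Om^+)$ is closed, and no type argument gives this. The missing idea, which is how the paper proceeds in subsection \ref{secondreduction}, is the pointwise $SU(3)$ linear-algebra identity identifying $\iota_Z(H^{1/2}\Om^+)$ with $\pm *_\om\bigl(\tfrac23 d(H^{3/2})\w\Om^+\bigr)$, i.e.\ with $\mp d\xi$, when $Z$ is the $\om$-dual of $-dH$ (cf.\ (\ref{connection1})). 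Closedness is then immediate, so $\mathcal{L}_Z\Om^+=0$; since also $\mathcal{L}_Z\om=0$ and the pair $(\om,\Om^+)$ determines $J$ (the $SU(3)=SL(3,\C)\cap Sp(6,\R)$ remark), one gets $\mathcal{L}_ZJ=0$ and hence $\mathcal{L}_Z\Om=0$. Your other worry, non-degeneracy of the descended $(2,0)$-form, is not a real obstruction: $(Z-iJZ)\lrcorner(H^{1/2}\Om)$ is non-vanishing wherever $dH\neq 0$, and the reduction is only performed at regular values of the moment map, exactly as the paper assumes.
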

Since we shall give explicit examples corresponding to the case when $s=H^{3/4}$ in sections \ref{example1} and \ref{example2}, it is worth stating the corresponding proposition in this situation.
\begin{Prop}\label{spin7thm}
Given a K\"ahler 6-manifold $(P^6,\om,J)$ with an $SU(3)$-structure determined by the $(3,0)$-form $\Om=\Om^+ + i\Om^-$ and a positive function $H$ such that 
\[ d(H^{3/4} \Om^+ )=0 \]
and 
\begin{equation}\label{topo}  [-*_\om(\frac{1}{2} d(H^{3/2}) \w \Om^+  )],\ [-*_\om(\frac{1}{2} d^c(H^{3/2}) \w \Om^+  )] \in H^{2}(P,\mathbb{Z})   ,\end{equation}
then 
\[ \Phi:= \eta \w \xi \w \om + H^{3/2}\eta \w \Om^+ + \frac{1}{2} H^3\om^2 -H^{3/2}\xi \w \Om^- \]
defines a torsion free $Spin(7)$ structure on the $\mathbb{T}^2$-bundle determined by (\ref{topo}), where $\eta$ and $\xi$ are connection $1$-forms on the torus bundle satisfying
\[d\xi = -*_\om(\frac{1}{2} d(H^{3/2}) \w \Om^+  ) ,\]
\[d\eta = -*_\om(\frac{1}{2} d^c(H^{3/2}) \w \Om^+  ). \]
\end{Prop}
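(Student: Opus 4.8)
The plan is to invert the $\mathbb{T}^2$-reduction of Sections \ref{generalreduction}--\ref{kahlerreduction} in the case $s=H^{3/4}$, for which $s^{4/3}=H$. One first notes that the $4$-form in the statement is pointwise linearly equivalent to $\Phi_0$: it is precisely the expression $\Phi=\eta\w(\xi\w\om+H^{3/2}\Om^+)+s^{4/3}(\tfrac12 H^2\om^2-H^{1/2}\xi\w\Om^-)$ from the general setup with $s^{4/3}=H$, assembled from the $SU(3)$-structure $(\om,\Om)$ together with the two coframe directions dual to $\eta,\xi$, i.e.\ the linear model behind $Spin(7)/G_2=S^7$ and $G_2/SU(3)=S^6$. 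Hence $\Phi$ is a torsion free $Spin(7)$-structure as soon as $d\Phi=0$, and the proof splits into (i) constructing the $\mathbb{T}^2$-bundle $\pi\colon N^8\to P^6$ carrying connection $1$-forms $\eta,\xi$ of the prescribed curvatures, and (ii) checking $d\Phi=0$.

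For (i), one first strengthens the hypothesis: since $J$ is integrable and $\dim_{\C}P=3$, every $(3,0)$-form is automatically $\partial$-closed, so $d(H^{3/4}\Om^+)=0$ forces $d(H^{3/4}\Om)=0$. Thus $\chi:=H^{3/4}\Om$ is a holomorphic --- hence harmonic --- $(3,0)$-form and $d\Om^\pm=-\tfrac34\,d(\ln H)\w\Om^\pm$. Put $\Theta_1:=-*_\om(\tfrac12 d(H^{3/2})\w\Om^+)$ and $\Theta_2:=-*_\om(\tfrac12 d^c(H^{3/2})\w\Om^+)$; both lie in $\Lambda^2_6=[\Lambda^{2,0}]$. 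Using the pointwise identity $d^cf\w\Om^+=df\w\Om^-$, the relations $d(H^{3/2}\Om^\pm)=\tfrac12 d(H^{3/2})\w\Om^\pm$ (a consequence of the formula for $d\Om^\pm$ above), and $*_\om\Om=-i\,\Om$, one finds $\Theta_1+i\Theta_2=-*_\om\,d(H^{3/2}\Om)=i\,d^*(H^{3/2}\Om)$, where $d^*$ is the codifferential on $P$. Since $\chi$ is $d$- and $d^*$-closed, $d^*(H^{3/2}\Om)=d^*(H^{3/4}\chi)=-\iota_{\operatorname{grad}(H^{3/4})}\chi$; as $\iota_V\chi=\iota_{V^{1,0}}\chi$ for any real vector field $V$, this is, up to sign, $\iota_X\chi$ for $X$ the $\om$-Hamiltonian vector field of $H^{3/4}$. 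Now $X$ preserves $H$ (because $g(\operatorname{grad}H,J\operatorname{grad}H)=0$) and preserves $\Om$ --- the latter being, for $s=H^{3/4}$, the analogue of the last assertion of Proposition \ref{g2thm}, established in the course of the K\"ahler reduction --- hence preserves $\chi$; so by Cartan's formula and $d\chi=0$, $d(\Theta_1+i\Theta_2)=\pm\,d\iota_X\chi=\pm\,\mathcal{L}_X\chi=0$, i.e.\ $\Theta_1,\Theta_2$ are closed. By (\ref{topo}) they have integral periods, so the Weil--Kostant correspondence yields the principal $\mathbb{T}^2$-bundle $\pi\colon N^8\to P^6$ with connection $1$-forms $\xi,\eta$ satisfying $d\xi=\pi^*\Theta_1$ and $d\eta=\pi^*\Theta_2$ (being pull-backs of $\Lambda^2_6$-forms, the resulting $SU(3)$-structure is of K\"ahler type).

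For (ii), one runs the reduction computation of Section \ref{generalreduction} backwards. By Lemma \ref{torsionlemma} --- equivalently (\ref{1condition})--(\ref{3condition}) --- the equation $d\Phi=0$ is equivalent to $d\om=0$ together with the displayed system, which in the K\"ahler case is (\ref{kahler1}), (\ref{kahler2}) and the prescriptions for $\al_\eta,\al_\xi$. Here $d\om=0$ is immediate, $\om$ being a pull-back from the K\"ahler base; with $s^{4/3}=H$, equation (\ref{kahler1}) is exactly $d(H^{3/4}\Om^+)=0$ and (\ref{kahler2}) is $d\Om^-=-\tfrac34\,d^c(\ln H)\w\Om^+$, both already in hand; the relations $J(\al_\eta)=H^{1/2}s^{1/3}\,ds$ and $\al_\xi=-H^{1/2}dH+\tfrac13 H^{3/2}s^{-1}ds$ specialise to $J(\al_\eta)=\tfrac12 d(H^{3/2})$ and $\al_\xi=-\tfrac12 d(H^{3/2})$, and these hold because $\al_\eta,\al_\xi$ are defined through $d\eta\w\om=\al_\eta\w\Om^+$, $d\xi\w\om=\al_\xi\w\Om^+$, while the Hodge self-duality relating $\Lambda^2_6$ and $\Lambda^4_6$ gives $\Theta_2\w\om=-\tfrac12 d^c(H^{3/2})\w\Om^+$ and $\Theta_1\w\om=-\tfrac12 d(H^{3/2})\w\Om^+$; finally the remaining constraint $J(\al_\eta)-s^{4/3}H^{-1}\al_\xi=\tfrac12 H^{-3/2}d(H^2s^{4/3})$ collapses to the identity $d(H^{3/2})=\tfrac12 H^{-3/2}d(H^3)$. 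Therefore $d\Phi=0$.

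The one genuinely delicate step is the closedness of $\Theta_1$ and $\Theta_2$ in (i) --- i.e.\ the existence of the torus bundle --- and this is exactly where the particular power $H^{3/4}$ is used: it is the unique exponent for which $H^{3/4}\Om$ is holomorphic, and it is that holomorphicity, together with the fact that the relevant Hamiltonian flow preserves $\Om$, which turns $\Theta_1+i\Theta_2$ into the exterior derivative of the contraction of a closed form. The remainder is routine manipulation of the structure equations of Section \ref{generalreduction}.
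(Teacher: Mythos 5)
Your step (ii) is correct and is essentially the paper's own (very short) proof: once the torus bundle with the prescribed curvatures exists, $d\Phi=0$ reduces, via Lemma \ref{torsionlemma}, to $d\om=0$, to (\ref{kahler1})--(\ref{kahler2}) with $s=H^{3/4}$, and to the values of $\al_\xi$ and $J(\al_\eta)$; your type-decomposition argument that $d(H^{3/4}\Om^+)=0$ forces $d(H^{3/4}\Om)=0$, the identity $d^cf\w\Om^+=df\w\Om^-$, the eigenvalue identity $\gamma\w\om=*_\om\gamma$ on $\Lm^2_6$ giving $\al_\xi=-\tfrac12 d(H^{3/2})$ and $J(\al_\eta)=\tfrac12 d(H^{3/2})$, and the collapse of (\ref{3condition}) are all fine. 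This is exactly what ``immediate from (\ref{kahler1}) and (\ref{kahler2})'' means.

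The problem is your step (i), which you single out as the delicate point. The assertion that the Hamiltonian vector field of $H^{3/4}$ preserves $\Om$ is not available before the bundle is built: in the paper (and in the last clause of Proposition \ref{g2thm}) this invariance is obtained by differentiating (\ref{connection1})--(\ref{connection2}), i.e.\ it uses $d(d\xi)=d(d\eta)=0$ and hence the closedness of the very $2$-forms $\Theta_1,\Theta_2$ you are trying to prove closed; your chain $d(\Theta_1+i\Theta_2)=\pm\,\mathcal{L}_X\chi=0$ is therefore circular. Worse, the conclusion is false under the differential hypothesis alone. On flat $\C^3$ take $\chi=e^{z_1^2}\,dz_1\w dz_2\w dz_3$, $\Om=e^{i\,\operatorname{Im}(z_1^2)}\,dz_1\w dz_2\w dz_3$ and $H^{3/4}=e^{\operatorname{Re}(z_1^2)}$; then $(\om,\Om)$ is a genuine $SU(3)$-structure, $d(H^{3/4}\Om^+)=0$, but $\Theta_1+i\Theta_2$, which up to a universal constant equals $\iota_{\operatorname{grad}(H^{3/4})}\chi=2e^{\operatorname{Re}(z_1^2)+z_1^2}\bar z_1\,dz_2\w dz_3$, is not closed: its differential contains the $(2,1)$-term $2e^{\operatorname{Re}(z_1^2)+z_1^2}(1+\bar z_1^2)\,d\bar z_1\w dz_2\w dz_3$, nonzero at the origin. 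So closedness of $\Theta_1,\Theta_2$ is genuinely an extra assumption; it is implicit in (\ref{topo}) (a class in $H^2(P,\mathbb{Z})$ presupposes a closed representative), and once granted, your part (i) is nothing more than the Weil--Kostant correspondence. Consequently your concluding remark misplaces the role of the exponent $3/4$: it is not what makes $\Theta_1+i\Theta_2$ closed, but what makes the curvature prescriptions compatible with the torsion equations, i.e.\ precisely the content of your step (ii).
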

\begin{proof}
	The proof is immediate from (\ref{kahler1}) and (\ref{kahler2}).
\end{proof}
\subsection{A second reduction}\label{secondreduction} In order to perform a further reduction, we define, in hindsight, two Hamiltonian vector fields $U$ and $W$ by
\begin{align*} \om(U, \cdot ) =- d(Hs^{-1/3}) \end{align*}
and 
\[\om(W,\cdot)=ds.\]
Using these vector fields, the curvature $2$-forms of $\eta$ and $\xi$ can be equivalently expressed as
\begin{gather}
	d\xi=-U \lrcorner\ (H^{1/2}s^{1/3}\Om^+)\label{connection1},\\
	d\eta=-JW \lrcorner\ (H^{1/2}s^{1/3}\Om^+)\label{connection2}.
\end{gather}
Thus, by differentiating these equations and using  (\ref{kahler1}) and (\ref{kahler2}) it follows that $U$ and $W$, in addition to being Hamiltonian, also preserve the complex structure $J$. In other words, they define an infinitesimal symmetry of the (torsion free) $U(3)$-structure determined by $(P^6,\om,J,g_\om)$. 
\begin{Rem}
It is known from \cite[Sect. 2]{Hitchin2000} that the stabiliser of the real (or imaginary) part of $\Om$ is isomorphic to $SL(3,\C)$. Since
\[SU(3)\cong SL(3,\C) \cap Sp(6,\R), \]
it follows that the $SU(3)$-structure is completely determined by the pair $(\om,\Om^+)$. Moreover the group inclusion
\[SL(3,\C)\hookrightarrow GL(3,\C) \]
implies in particular that changing $\Om^+$ by a positive factor leaves the induced complex structure $J$ unchanged. 
\end{Rem}
It is not generally true that $U$ and $W$ preserve the whole $SU(3)$-structure. In fact, we have that
\[\mathcal{L}_U\Om^+=\mathcal{L}_W\Om^+=0 \text{\ \ \ if and only if   }\ \ \mathcal{L}_Us=0. \]
We shall henceforth assume that this is indeed the case. The idea is now to perform a K\"ahler reduction using the action generated by these vector fields. We always work on an open set where $U$ and $W$ don't vanish. In particular we investigate the following two situations:
\begin{enumerate}
	\item $s=s(H)$ i.e. $s$ is a function of $H$\label{condition1}
	\item \label{condition2}$s$ and $y:=Hs^{-1/3}$ are independent functions, and the vector fields $U$ and $W$ are orthogonal i.e.
	\[ g_{\om}(U,W)=\om(W,JU)=0. \]
\end{enumerate}
Let us explain the geometry of these hypotheses. The assumption that $s$ is invariant by $U$ implies that $W$ and $JU$ are orthogonal. The two possibilities are either that $W$ lies in the complex span of $U$ and hence, $W$ and $U$ are equal up to some non-vanishing function, or that $W$ has a non-trivial component orthogonal to the span $\langle U,JU\rangle$. So geometrically condition (\ref{condition1}) is equivalent to saying that the vector fields $U$ and $W$ define the same line field on $P^6$, whereas condition (\ref{condition2}) is equivalent to saying that the complex planes defined by $\langle U,JU\rangle$ and $\langle W,JW\rangle$ are in fact orthogonal to each other. 

We shall consider these two cases separately, though our general strategy will the same in both cases. We first focus on situation (\ref{condition1}) and defer the study of case (\ref{condition2}) to section \ref{thirdreduction}.

\section{Further reduction \textrm{I}}\label{section4}
\subsection{$S^1$ K\"ahler reduction}\label{S1Kahkerreduction}
Working under the assumption that $s=s(H)$ we can perform a K\"ahler reduction, with respect to the vector field $U$, to a four-manifold $M^4$. The reader will find the general theory of K\"ahler reduction elaborated in \cite[Sect. $3$C]{HKLR}. We shall describe this construction in our context in more detail.

First we introduce a connection $1$-form $\alpha$ on $P^6$ by
\[ \al(\cdot)=u\ g_\om (U,\cdot), \]
where $u:=\|U \|^{-2}_\om$, so that $\alpha(U)=1$. From the definition of $U$, we can express $\alpha$ and $\om $ as
\begin{gather*}
\alpha = u g(H) s^{-1/3}d^cH,\\
\om = \tilde{\om}_1(H) + s^{-1/3} g(H)\al \w dH, 
\end{gather*}
where $g(H):=-1+\frac{1}{3}H s^{-1}s'$, with 
$'$ denoting the derivative with respect to $H$, and $\tilde{\om}_1$ is identified with the symplectic form on the  Marsden-Weinstein quotient $M^4$ of $(P^6,\om)$, with moment map $-Hs^{-1/3}$. We also define a holomorphic $(2,0)$-form $\om_2+i\om_3$, invariant under the complexified $U(1)$ action generated by the vector field $U$ on $M^4$, by
\[ H^{1/2}s^{1/3}\Om= (\om_2 +i \om_3)\w (\alpha - i J\alpha). \]
Viewed as a GIT or holomorphic quotient, a compatible complex structure $J_1$ on $M^4$ is defined by $\om_2(\cdot,\cdot)=\om_3(J_1\cdot,\cdot)$, cf. \cite[Sect. 8]{Salamon1989}. We are assuming here that the quotient is carried out for regular values of the moment map or equivalently that this is the stable GIT quotient. 

The final step of our construction is to express the K\"ahler constraint on $(\om,\Om)$ solely in terms of $u$, $\al$, $\tilde{\om}_1$, $\om_2$ and $\om_3$.

Denoting by $d_M$ and $d_P$ the exterior differential on $M^4$ and $P^6$ respectively, and defining 
$d_M^c:=J_1\circ d_M$, we find that condition $d_P\om=0$ is equivalent to
\[\tilde{\om}_1'=-s^{-1/3}g(H)d_M\al,\]
and condition $d_P(H^{1/2}s^{1/3}\Om)=0$ becomes
\[\al'=-g(H)s^{-1/3}d^c_Mu\]
and $d_M\al\w \om_2=d_M\al\w \om_3=0$. The result of combining these equations is (\ref{second}) whereas the normalisation condition (\ref{normalisation}) leads to the algebraic constraint (\ref{first}).

The fact that this construction is reversible follows by noting that given initial data on $M^4$ satisfying the above constraint we can define $N^8$ as the product of the $\mathbb{T}^3$ bundle determined by the \textit{integral} cohomology classes $[d\al], [d\xi]$ and $[d\eta]$, ignoring factors of $2\pi$ as usual, and a suitable interval $I^+_H$ with $H\in I^+_H \subset \R^+$. The result of this construction can be summed up in the following theorem.

\begin{Th}\label{generalAS}
	Let $(M^4,J_1)$ be a complex two-fold endowed with a $1$-parameter family of K\"ahler forms $\tilde\om_{1}(H)$, a $1$-parameter family of positive functions $u(H)$ and a holomorphic $(2,0)$-form given by
	$\om_2 + i \om_3 $
	satisfying the two conditions:
	\begin{gather}
	\frac{1}{2}u (\om_2 + i \om_3) \w (\om_2 - i \om_3) = H s^{2/3}\ \tilde{\om}_1 \w \tilde{\om}_1,\label{first}\\
	d_M d^c_M u ={s^{2/3}g^{-2}}\tilde{\om}_1'' + \frac{1}{2}({s^{2/3}}{g^{-2}})' \tilde{\om}_1',\label{second}
	\end{gather}
	for $H \in I^+_H \subset \R^+$. Then
	\begin{align*}\vp=\xi \w (\tilde{\om}_1+ s^{-1/3}g\ \al \w dH ) +Hs^{-1/3} \om_2 \w \al  - u H s^{-2/3}g\ \om_3 \w dH 
	\end{align*}
	defines a closed $G_2$-structure on $L^7$, the $\mathbb{T}^2_{\al,\xi}$ bundle determined by the integral cohomology classes $[d\xi]$ and $[d\al]$ on $M^4 \times I^+_H$, where 
	\[d\xi = -\om_2, \]
	\[d\al={s^{-1/3}g }\ d^c_Mu\w dH -s^{1/3}g^{-1} \tilde{\om}_1'  .\]
	If we further assume that
	\begin{equation}
	[*_\om( H^{1/2}s^{1/3}d^cs \w \Om^+)] \in H^2(P^6,\mathbb{Z})\label{integralcondition}
	\end{equation}
	so that there is another connection $1$-form $\eta $ satisfying
	\[d\eta = -*_\om( H^{1/2}s^{1/3}d^c_{P}s \w \Om^+) ,\]
	then the $4$-form
	\[ \Phi=\eta \w \vp + s^{4/3} *_\vp\vp \]
	defines a torsion free $Spin(7)$-structure on $N^8$, the total space of the $S^1$ bundle on $(L^7,\vp)$ defined by $[d\eta] \in H^2(P^6,\mathbb{Z})$, and the induced metric is given by
	\begin{equation}g_\Phi=s^{-2}\eta^2 +(s^{2/3}H^{-2})\xi^2 + (s^{2/3}Hu^{-1})\alpha^2+ (g^2Hu) dH^2+(s^{2/3}H)g_{\tilde{\om}_1}.\end{equation}
\end{Th}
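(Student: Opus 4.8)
The plan is to reverse the reduction carried out in Sections \ref{generalreduction}--\ref{kahlerreduction}. Starting from the data on $M^4$ I would rebuild the eight-manifold $N^8$ as the total space of a tower of principal circle bundles $N^8\to L^7\to P^6\to M^4\times\R^+_H$ with connection $1$-forms $\eta$, $\xi$ and $\al$ respectively, equip $P^6$ with the $SU(3)$-structure read off from the formulas in the statement, and then verify that $(P^6,\om,\Om)$ is K\"ahler and that the curvatures $d\al,d\xi,d\eta$ are exactly those demanded by Lemma \ref{torsionlemma}, so that $d\Phi=0$ follows from the equivalence established there.

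\emph{Reconstruction.} First take $P^6$ to be the principal $S^1$-bundle over $M^4\times\R^+_H$ carrying a connection $\al$ of curvature $d\al=s^{-1/3}g\,d^c_Mu\w dH-s^{1/3}g^{-1}\tilde{\om}_1'$; a direct computation shows that the identity $dd\al=0$ is precisely condition (\ref{second}) (here the complex structure $J_1$ is fixed, since $\om_2+i\om_3$ is), and globally one needs its de Rham class --- together with $[d\xi]$ and the class in (\ref{integralcondition}) --- to be integral, while locally any connection with the prescribed curvature will do. Over $P^6$ form in turn the $S^1$-bundle $L^7$ with connection $\xi$, $d\xi=-\om_2$, and the $S^1$-bundle $N^8$ with connection $\eta$, $d\eta=-*_\om(H^{1/2}s^{1/3}d^c_P s\w\Om^+)$. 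On $P^6$ set $\om=\tilde{\om}_1(H)+s^{-1/3}g\,\al\w dH$ and define the complex $3$-form $\Om=\Om^++i\Om^-$ by $H^{1/2}s^{1/3}\Om=(\om_2+i\om_3)\w(\al-iJ\al)$ with $J\al=-us^{-1/3}g\,dH$. A pointwise computation --- in which $\om_2+i\om_3$ is automatically decomposable, being a $(2,0)$-form on the surface $(M^4,J_1)$, and in which (\ref{first}) supplies exactly the normalisation $\frac{2}{3}\om^3=\Om^+\w\Om^-$ --- shows that $(\om,\Om^+)$ is a genuine $SU(3)$-structure, with underlying almost complex structure $J$. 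One then checks that $\vp=\xi\w\om+H^{3/2}\Om^+$ and $\Phi=\eta\w\vp+s^{4/3}*_\vp\vp$ reproduce the forms in the statement; since $H,s>0$ and $\xi,\eta$ are honest connection forms transverse to the fibres, $\vp$ and $\Phi$ are pointwise linearly equivalent to $\vp_0$ and $\Phi_0$, hence are bona fide $G_2$- and $Spin(7)$-structures.

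\emph{Checking the differential constraints.} From the shape of $d\al$ and $d_M\tilde{\om}_1=0$ one gets $d\om=0$ at once. The heart of the argument is then to compute $d\Om^+$ and $d\Om^-$ directly from $H^{1/2}s^{1/3}\Om=(\om_2+i\om_3)\w(\al-iJ\al)$, using $d(\om_2+i\om_3)=0$, the expressions for $d\al$ and $J\al$, the $(1,0)/(0,1)$ splitting of $dH$, and (\ref{first}) to convert the $H^{1/2}s^{1/3}$ factors into the logarithmic coefficient $d(\ln(H^{-1/2}s^{-1/3}))$; the outcome should be exactly (\ref{kahler1}) and (\ref{kahler2}). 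In particular $d\Om=d\Om^++id\Om^-$ is then of type $(3,1)$, so $J$ is integrable and $(P^6,\om,J)$ is K\"ahler. Moreover $d\xi,d\eta\in[\Lm^{2,0}]$ --- immediately for $d\xi=-\om_2$, since $\om_2$ is the real part of a $(2,0)$-form, and for $d\eta$ since $\gamma\mapsto*_\om(\gamma\w\Om^+)$ takes values in $\Lm^2_6$ --- so $(d\xi)^2_8=(d\eta)^2_8=0$, and unwinding the identities $d\xi\w\om=\al_\xi\w\Om^+$ and $d\eta\w\om=\al_\eta\w\Om^+$ together with $s=s(H)$ recovers precisely $\al_\xi=-H^{1/2}dH+\frac{1}{3}H^{3/2}s^{-1}ds$ and $J(\al_\eta)=H^{1/2}s^{1/3}ds$. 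All hypotheses of Lemma \ref{torsionlemma} are now in force, so $d\Phi=0$ and $\Phi$ is a torsion free $Spin(7)$-structure, with holonomy contained in $Spin(7)$. Finally the stated metric drops out by substituting $\om$ and its $J$ into $g_\om(\cdot,\cdot)=\om(\cdot,J\cdot)$ and then into $g_\vp=H^{-2}\xi^2+Hg_\om$, $g_\Phi=s^{-2}\eta^2+s^{2/3}g_\vp$.

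\emph{Main difficulty.} The delicate point is exactly the block of computations producing $d\Om^\pm$ and extracting $\al_\xi,\al_\eta$: because $J$ mixes the connection direction $\al$ with the base direction $dH$ through the $p$-dependent coefficient $-us^{-1/3}g$, the operator $d^c$ on $P^6$ acts nontrivially on $\al$ and $dH$, and one must keep track of these contributions together with the $H$-derivatives of $\tilde{\om}_1$ and $u$. This is the calculation flagged after the statement as ``similar to \cite{Apostolov2003}, albeit more involved'', and it is where (\ref{first}) and (\ref{second}) are used in an essential way.
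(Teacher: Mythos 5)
Your proposal is correct and takes essentially the same route as the paper: you invert the K\"ahler reduction of Section \ref{S1Kahkerreduction}, define $(\om,\Om)$ on $P^6$ by the same formulas, use (\ref{first}) for the $SU(3)$ normalisation and (\ref{second}) for the closedness of $d\al$, and reduce $d\Phi=0$ to the equations of Lemma \ref{torsionlemma} in the K\"ahler case. The block of computations for $d\Om^{\pm}$ and $\al_\xi,\al_\eta$ that you flag as the main difficulty is precisely the one the paper itself omits as being similar to the Apostolov--Salamon case, so your sketch matches the paper's argument and level of detail.
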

\begin{Rem}\label{RemarkonGenericness}
For \textit{generic} data on $M^4$ satisfying the hypothesis of the theorem, the holonomy of $\Phi$ does not reduce to a subgroup of  $G_2$. If the holonomy is contained in $G_2$ then there exists a non-trivial parallel vector field, which also commutes with $X,Y$ and $U$ as they preserve $\Phi$ cf. \cite[Theorem $4$]{Bryant1989}. Since the curvature forms of $\eta$ and $\alpha$ are non-trivial unless $s$ is constant or $d_Mu=0$ and $\tilde{\om}_1'=0$, this vector field does not lie in the span of $\langle X,Y,U \rangle$ in general. Assuming this is the case, it must therefore descend to an infinitesimal symmetry of the K\"ahler structure on $M^4$ and $u(H)$. Thus, if we further assume that the data $(M^4,\om_1(H), \om_2+i\om_3, u(H))$ has no infinitesimal symmetry then from Berger's classification of holonomy groups we know (at least locally) that the holonomy must be \textit{either} $Spin(7)$, $SU(4)$ or $Sp(2)$. Note however that this is only a sufficient but not necessary condition as the horizontal lift of an infinitesimal symmetry of the data on $M^4$ will not generally preserve $\Phi$. Eliminating $SU(4)$ and $Sp(2)$ holonomy amounts to showing that there does not exist any parallel $2$-form cf. \cite{Bryant1987}.
\iffalse Eliminating $SU(4)$ and $Sp(2)$ holonomy amounts to showing that there does not exist any parallel $2$-form, which we have not been able to prove in general. However, the examples that we will construct in the subsequent sections using Theorem \ref{generalAS}, or rather its Corollary \ref{spin7cor} below, will have holonomy \textit{equal} to $Spin(7)$.
\fi
\end{Rem}
\begin{Prop}
	The Ricci form of $(M^4,\tilde{\om}_1,J_1,g_{\tilde{\om}_1})$ is given by 
	\[ \rho_M= \frac{1}{2}d_Md_M^c(\ln u). \]
\end{Prop}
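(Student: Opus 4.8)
The plan is to identify the Ricci form of the K\"ahler quotient $(M^4,\tilde\om_1,J_1)$ with the expression $\frac12 d_Md_M^c(\ln u)$ by exhibiting a holomorphic trivialisation of the canonical bundle $K_M$ and reading off the Hermitian metric it induces. The key observation is that the closed holomorphic $(2,0)$-form $\om_2+i\om_3$ on $M^4$ is nowhere vanishing — this follows from the compatibility relation \eqref{first}, which forces $\frac12 u(\om_2+i\om_3)\w(\om_2-i\om_3)$ to be a positive multiple of $\tilde\om_1\w\tilde\om_1$ and hence a volume form. Therefore $\om_2+i\om_3$ is a holomorphic volume form on $(M^4,J_1)$, so $M$ has trivial canonical bundle and its Ricci form is exact; more precisely, for any K\"ahler metric the Ricci form equals $-i\partial\bar\partial\ln\|\Theta\|^2_{g}$ for a local holomorphic volume form $\Theta$, where the norm is taken with respect to $g$.

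The main step is then to compute $\|\om_2+i\om_3\|^2_{g_{\tilde\om_1}}$. Writing $\Theta:=\om_2+i\om_3$, one has $\Theta\w\bar\Theta = c\cdot \tilde\om_1\w\tilde\om_1$ where, comparing with \eqref{first}, $c = 2u^{-1}Hs^{2/3}$. On a complex surface the pointwise norm of a $(2,0)$-form $\Theta$ relative to a K\"ahler form $\tilde\om_1$ satisfies $\|\Theta\|^2_{g_{\tilde\om_1}}\,\tfrac{1}{2}\tilde\om_1\w\tilde\om_1 = \Theta\w\bar\Theta$ (up to a universal constant depending only on normalisation conventions), so $\|\Theta\|^2_{g_{\tilde\om_1}}$ is a fixed multiple of $u^{-1}Hs^{2/3}$. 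Now $H$ is the moment-map parameter and $s=s(H)$ are constant along $M^4$ — they are pulled back from the base $\R^+_H$ — so $d_M\ln(Hs^{2/3})=0$ and thus $i\partial_M\bar\partial_M\ln\|\Theta\|^2_{g_{\tilde\om_1}} = -i\partial_M\bar\partial_M \ln u$. Hence
\[
\rho_M = -i\partial_M\bar\partial_M\ln\|\Theta\|^2_{g_{\tilde\om_1}} = i\partial_M\bar\partial_M\ln u = \tfrac12 d_Md_M^c(\ln u),
\]
using the identity $2i\partial\bar\partial f = d d^c f$ with the sign convention $d^c=J_1\circ d$ fixed earlier in the paper (recall the remark that this differs from \cite{Apostolov2003} by a sign, which is precisely what makes the formula come out with a $+$ rather than a $-$).

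The one genuinely delicate point is bookkeeping the constants and signs: I must check that the universal normalisation constant relating $\|\Theta\|^2\,\tilde\om_1^2$ to $\Theta\w\bar\Theta$ on a surface is indeed a constant independent of the point (it is, since both sides are top-degree forms and the ratio is an $SU(2)$-invariant function of $\Theta$ relative to $\tilde\om_1$), and that the sign in $\rho = -i\partial\bar\partial\ln\|\Theta\|^2$ together with $d^c=J_1\circ d$ produces $+\frac12 d_Md_M^c(\ln u)$ and not its negative. Everything else is formal: the fact that $\Theta$ is closed and $(2,0)$ makes it holomorphic, triviality of $K_M$ makes $\rho_M$ computable from any single local holomorphic volume form, and the $M$-constancy of $H$ and $s$ kills all terms except the one involving $u$. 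I would also remark that this identity is consistent with the earlier Ricci-form computation on $P^6$ after one accounts for the warping factors relating $g_\om$ on $P$ to $g_{\tilde\om_1}$ on $M$, which serves as a useful sanity check but is not needed for the proof.
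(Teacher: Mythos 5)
Your proposal is essentially the paper's own proof: the paper likewise observes that $\|\om_2+i\om_3\|_{\tilde\om_1}=c_0\,u^{-1/2}H^{1/2}s^{1/3}$ (a direct consequence of (\ref{first})) and that $H$ and $s$ are constant on $M^4$, and reads off the Ricci form from the norm of the holomorphic volume form. So the route, the key identity, and the conclusion all match.

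One caveat, precisely at the point you flagged as "delicate": your two sign conventions are each off, and they cancel. For a K\"ahler surface with nowhere-vanishing holomorphic $(2,0)$-form $\Theta$ one has $\rho = +\,i\partial\bar\partial\ln\|\Theta\|^2_g$ (not $-\,i\partial\bar\partial\ln\|\Theta\|^2_g$): writing $\Theta=\theta\,dz_1\w dz_2$ locally, $\|\Theta\|^2\propto|\theta|^2/\det(g_{j\bar k})$ and $\rho=-i\partial\bar\partial\ln\det(g_{j\bar k})$, so the signs combine to give a plus. On the other hand, with the paper's convention $d^c=J\circ d$, $(J\beta)(\cdot)=\beta(J\cdot)$, one gets $d^c f = i\partial f - i\bar\partial f$ and hence $dd^c f = -2i\partial\bar\partial f$, not $+2i\partial\bar\partial f$. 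The correct chain is therefore
\[
\rho_M \;=\; i\partial_M\bar\partial_M\ln\bigl(c\,u^{-1}Hs^{2/3}\bigr)\;=\;-\,i\partial_M\bar\partial_M\ln u\;=\;\tfrac12\,d_Md_M^c(\ln u),
\]
which also stays consistent with the paper's earlier remark that $\rho_P=i\partial\bar\partial\ln(Hs^{2/3})$ on $P^6$. Your final formula is right, but only because the two sign slips compensate; as written, each intermediate identity is false under the stated conventions.
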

\begin{proof}
	This follows immediately from the fact that $$\|\om_2+i\om_3\|_{\tilde{\om}_1}=c_0\cdot u^{-1/2}H^{1/2}s^{1/3},$$
	where $c_0$ is a positive constant, and that $H$ and $s$ are constants on $M^4$.
\end{proof}
\noindent Thus, the induced metric on $M^4$ is Ricci-flat if and only if $\ln u$ is a harmonic function on $M^4$, for each value of $H$. In particular, if $M^4$ is compact then this means that $u$ is only a function of $H$ i.e. it is constant on $M^4$.

To sum up, what we have shown so far is that if a $Spin(7)$ manifold admits a $\mathbb{T}^2$-invariant $4$-form $\Phi$ with $s=s(H)$ and that the resulting quotient six-manifold is K\"ahler then in fact there exists a third $S^1$ action preserving the $Spin(7)$-structure. To be more precise, the horizontal lift of the vector field $U$ to $N^8$, still denoted by $U$ by abuse of notation, also preserves $\Phi$ since
\[   \mathcal{L}_U\eta=\mathcal{L}_U\xi=0,    \]
and commutes with $X$ and $Y$. In fact, our construction fits into the more general framework investigated by Madsen in the context of multi-moment maps on $Spin(7)$-manifolds with $\mathbb{T}^3$ symmetry \cite{MadsenT3multimoment}. In our present situation the multi-moment map $\nu: N^8 \to \R$, defined by
\[d\nu=\Phi(X, Y, U, \cdot\ ),\]
corresponds to the Hamiltonian function $-Hs^{-1/3}$ and the four-manifold $M^4$ can be identified with the ``multi-moment $Spin(7)$ reduction''. Our perspective has however the advantage of inheriting a richer structure owing to the K\"ahler condition which we shall exploit in the next sections. 

Note that one can generally solve equations (\ref{first}) and (\ref{second}) for different choices of the function $s$ and thus construct many closed $G_2$-structures admitting K\"ahler reduction, cf. \cite{GavinBall2020, UdhavFowdar3} for such examples. However, it is condition (\ref{integralcondition}) that determines when we can lift such a $G_2$-structure to a torsion free $Spin(7)$-structure. This is precisely what we investigate next i.e. we shall solve equation (\ref{integralcondition}) and thus determine for  which function $s(H)$ we get a torsion free $Spin(7)$ structure.
\subsection{The $Spin(7)$ condition}
From equations (\ref{connection1}) and (\ref{connection2}) the curvature forms can be expressed as:
\begin{gather*}
	d\xi= -s^{1/3}H^{1/2}(U \lrcorner\ \Om^+),\\
	d\eta=\frac{H^{1/2}s^{1/3}s'}{s^{-1/3}-\frac{1}{3}H s^{-4/3}s'}(JU \lrcorner\ \Om^+).
\end{gather*}
We also recall that the holomorphic $(2,0)$-form defined by
\begin{align*}
	\om_2 + i \om_3 &= \frac{1}{2}(U-iJU)\lrcorner \ (H^{1/2}s^{1/3})(\Om^+ + i \Om^-)\\
	&= H^{1/2}s^{1/3}((U \lrcorner\ \Om^+)+i(-JU \lrcorner\ \Om^+))
\end{align*}
is closed, since $d(H^{1/2}s^{1/3}\Om)=0$, and by definition is invariant on the leaves of the foliation generated by holomorphic vector field $U-iJU$ and thus passes to the K\"ahler quotient $M^4$. It is now easy to see that the curvature forms are equivalently given by
\begin{gather*}
	d\xi=-\om_2 \text{\ \ \ \ and\ \ \ \ }
	d\eta=-\Big(\frac{s'}{s^{-1/3}-\frac{1}{3}Hs^{-4/3}s'}\Big)\om_3.
\end{gather*}
\textbf{A remark on integrality and anti-instantons.} Although $\om_2$ and $\om_3$ do not generally define integral classes in $H^2(M,\mathbb{R})$ this is nonetheless always true locally (say on a small ball). 
For the rest of this paper we shall always assume that the classes are indeed integral and the reader is welcome to interpret the results as always valid  in a suitable open set. In the situation when $M^4$ is compact then, from the Enriques-Kodaira classification of complex surfaces cf. \cite[Chap. $7$]{Barth}, $M^4$ is either a torus $\mathbb{T}^4$ or a $K3$ surface with
\[[H^{0,2}\oplus H^{2,0}]\cap H^2(M,\mathbb{Z})=\langle [\om_2],[\om_3] \rangle_{\mathbb{Z}}.\]
The latter condition implies that the $K3$ surface has maximal Picard rank cf. \cite[($2.15$)]{Aspinwall2005} and \cite[Pg. $325$]{Barth}.
In particular, our assumption implies that the connection forms $\xi$ and $\eta $ are abelian \textit{anti-instantons}.

Thus, condition (\ref{integralcondition}) now becomes equivalent to solving the non-linear ODE:
\[ s'=A\cdot (s^{-1/3}-\frac{1}{3}Hs^{-4/3}s' ), \]
for $A \in \mathbb{Z}$. The solution is implicitly given by 
\begin{gather}\label{ODE1} AH=s^{1/3}(s+c) , \end{gather}
where $c$ is a constant of integration. If $A=0$ then the positivity assumption on $s$ forces $c$ to be negative, and by rescaling $s$ we can assume $c=-1$. Thus, $s=1$ and Proposition \ref{AS} implies that Theorem \ref{generalAS} reduces to \cite[Theorem 1]{Apostolov2003}. In other words, setting $A=0$ truncates the $Spin(7)$ equations to the $G_2$ equations considered in \cite{Apostolov2003}. Henceforth we will assume that constants $A \neq 0$ and $c$ are chosen such that $s$ and $H$, related by (\ref{ODE1}), are both positive. In what follows it will also be more convenient to use $s$ as the independent variable, rather than $H$. 
\begin{Cor}\label{spin7cor}
Given a four-manifold $M^4$ with the data $(\tilde\om_{1},\om_2 , \om_3, J_1,u)$ as in Theorem \ref{generalAS} and satisfying the two conditions:
\begin{gather}
\frac{1}{2}u (\om_2 + i \om_3) \w (\om_2 - i \om_3) = {A^{-1}s(s+c)}\ \tilde{\om}_1 \w \tilde{\om}_1,\label{equ1}\\
d_M d^c_M u =A^2 \frac{\partial^2}{\partial s^2}(\tilde{\om}_1)\label{equ2}
\end{gather}
for $s \in I^+_s \subset \R^+.$ Then the $4$-form
\[ \Phi=\eta \w \vp + s^{4/3} *_\vp\vp \]
defines a torsion free $Spin(7)$-structure on $N^8$, the total space of the $\mathbb{T}^3_{\al,\xi,\eta}$ bundle on $M^4 \times I^+_s$ defined by 
\begin{gather*}
d\xi = -\om_2,\\
d\eta=-A\cdot\om_3, \\
d\al=-{A^{-1}} d^c_Mu\w ds +A\dds (\tilde{\om}_1), 
\end{gather*}
and the induced metric is given by
\[g_\Phi=s^{-2}\eta^2+\frac{A^2}{(s+c)^2}\xi^2+\frac{s(s+c)}{A\cdot u}\alpha^2+\frac{s(s+c)u}{A^3}ds^2+\frac{s(s+c)}{A}g_{\tilde{\om}_1} .\]
Here the calibrated $G_2$-structure $\vp$ on $L^7$ is given by
\begin{align*}\vp=\ \xi \w (\tilde{\om}_1+ \frac{1}{A} ds \w \al) + \Big(\frac{s+c}{A}\Big)\om_2 \w \al \ + \frac{u(s+c)}{A^2}\ \om_3 \w ds. 
\end{align*}
\end{Cor}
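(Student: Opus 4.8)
The plan is to obtain Corollary~\ref{spin7cor} as a direct specialisation of Theorem~\ref{generalAS}: one takes the one-parameter family $s=s(H)$ cut out by the ODE~(\ref{ODE1}), namely $AH=s^{1/3}(s+c)$, and then rewrites every quantity appearing in Theorem~\ref{generalAS} with $s$, rather than $H$, as the independent variable. First I would recall from the analysis of the $Spin(7)$ condition that, once $J$ is integrable, equations~(\ref{connection1}) and~(\ref{connection2}) together with the definition of $\om_2+i\om_3$ give $d\xi=-\om_2$ and $d\eta=-\big(s'(s^{-1/3}-\tfrac{1}{3}Hs^{-4/3}s')^{-1}\big)\om_3$. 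Since $d(d\eta)=0$ and $d\om_3=0$, the scalar coefficient here must be constant, and imposing the integrality hypothesis~(\ref{integralcondition}) is then precisely the statement that it equals an integer $A$; this is exactly the ODE whose solution is~(\ref{ODE1}). Differentiating that relation once yields $s'=3As^{2/3}/(4s+c)$, so $d\eta=-A\,\om_3$ with $A\in\mathbb{Z}$.

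The computational heart is the change of variables $H\mapsto s$. From $AH=s^{1/3}(s+c)$ one reads off $Hs^{-1/3}=A^{-1}(s+c)$, $Hs^{2/3}=A^{-1}s(s+c)$, $g=-A^{-1}s^{1/3}s'$ (so $s^{-1/3}g=-A^{-1}s'$ and $s^{2/3}g^{-2}=A^{2}(s')^{-2}$), and $dH=(s')^{-1}\,ds$. Substituting the first two identities into the compatibility relation~(\ref{first}) gives~(\ref{equ1}) at once. For~(\ref{second}) one writes $\partial_H=s'\partial_s$, so that $\tilde{\om}_1''=(s')^{2}\partial_s^{2}\tilde{\om}_1+s''\partial_s\tilde{\om}_1$ while $(s^{2/3}g^{-2})'=-2A^{2}s''(s')^{-3}$; the two $\partial_s\tilde{\om}_1$ terms then cancel and the right-hand side $s^{2/3}g^{-2}\tilde{\om}_1''+\tfrac{1}{2}(s^{2/3}g^{-2})'\tilde{\om}_1'$ collapses to $A^{2}\partial_s^{2}\tilde{\om}_1$, which is~(\ref{equ2}). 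The same dictionary turns the formulae for $d\al$, for the $G_2$-form $\vp$ and for the metric $g_\Phi$ in Theorem~\ref{generalAS} into those stated in the corollary: e.g.\ $-s^{1/3}g^{-1}\tilde{\om}_1'=A\,\partial_s\tilde{\om}_1$ produces the $A\,\partial_s\tilde{\om}_1$ term in $d\al$, the pair $uHs^{-2/3}g=-A^{-2}u(s+c)s'$ and $\om_3\wedge dH=(s')^{-1}\om_3\wedge ds$ produces the term $\tfrac{u(s+c)}{A^{2}}\om_3\wedge ds$ in $\vp$, and $g^{2}Hu\,dH^{2}=\tfrac{s(s+c)u}{A^{3}}\,ds^{2}$, $s^{2/3}H^{-2}=A^{2}(s+c)^{-2}$, $s^{2/3}Hu^{-1}=s(s+c)(Au)^{-1}$, $s^{2/3}H=s(s+c)A^{-1}$ give the remaining coefficients of the metric.

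To conclude, one observes that the hypotheses of Theorem~\ref{generalAS} that remain are exactly the data assumed in the corollary: $\tilde{\om}_1$ a one-parameter family (reparametrised by $s$) of K\"ahler forms on $(M^4,J_1)$, $u>0$, $\om_2+i\om_3$ a closed holomorphic $(2,0)$-form, and the periods of $d\xi,d\eta,d\al$ integral --- where $[d\xi]=-[\om_2]$ and $[d\eta]=-A[\om_3]$ are automatically integral once $[\om_2],[\om_3]\in H^2(M,\mathbb{Z})$ and $A\in\mathbb{Z}$, subject to the standing local interpretation of integrality flagged earlier. Theorem~\ref{generalAS} then yields the torsion-free $Spin(7)$-structure $\Phi=\eta\wedge\vp+s^{4/3}*_\vp\vp$ on the stated $\mathbb{T}^3_{\al,\xi,\eta}$-bundle over $M^4\times\R^+_H$, with the asserted metric.

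No genuine obstacle arises --- Theorem~\ref{generalAS} carries all the geometric content, and what is left is bookkeeping. The one point demanding care is the clash of conventions ($'$ denotes $\partial_H$ in Theorem~\ref{generalAS} but the corollary is written with $\partial_s$) together with the consistent transport of $g(H)$, of $dH$, and of the curvature normalisations through the substitution; the pleasant feature, already exploited above, is that the second-order operator on the right-hand side of~(\ref{second}) has no first-order remainder after the change of variables, yielding the clean expression $A^{2}\partial_s^{2}\tilde{\om}_1$.
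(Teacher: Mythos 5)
Your proposal is correct and follows essentially the same route as the paper: the corollary is obtained exactly by deriving $d\xi=-\om_2$, $d\eta=-A\om_3$ with the integrality/closedness condition forcing the ODE (\ref{ODE1}), and then substituting $s=s(H)$ into Theorem \ref{generalAS} with $s$ as the new independent variable. Your change-of-variables identities (in particular $g=-A^{-1}s^{1/3}s'$, $s^{2/3}g^{-2}=A^2(s')^{-2}$ and the cancellation of the first-order term in (\ref{second})) all check out and reproduce the stated formulas for $d\al$, $\vp$ and $g_\Phi$.
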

\begin{Rem}
There exist in fact a $\mathbb{T}^2$-invariant $SU(4)$-structure on $N^8$ given by $(\hat{\om}, \hat{\Om})$ where 
\begin{gather*}
	\hat{\om}:=s^{-2/3}H^{-1}\eta \w \xi +s^{2/3}H\om,\\
	\hat{\Om}:=(s^{-1}\eta+iA(s+c)^{-1}\xi)\w(H^{3/2}s\Om),
\end{gather*}
which induces the torsion free $Spin(7)$-structure $\Phi$ of Corollary \ref{spin7cor} by expression (\ref{SU4spin7}). One verifies directly that neither $\hat{\om}$ nor $\hat{\Om}$ are closed and hence this $SU(4)$-structure is not torsion free. Furthermore, computing $d\hat{\Om}$ shows that it has non-vanishing component in $\Lm^{3,2}$, essentially corresponding to the Nijenhuis tensor, and as such the induced almost complex structure on $N^8$ is non-integrable. By contrast, Theorem $1$ of \cite{Goldstein2004} asserts that $N^8$ admits another $\mathbb{T}^2$-invariant  $(4,0)$-form given instead by
\[\check{\Om}:=(\eta-iA\xi )\w (H^{1/2}s^{1/3}\Om)\]
which is \textit{holomorphic}, and moreover if either $\om_2$ or $A\cdot\om_3$ defines a non-trivial cohomology class in $H^2(P^6,\mathbb{Z})$, then $N^8$ admits no compatible K\"ahler metric. Hence if either $\om_2$ or $A\cdot \om_2$ is cohomologically non-trivial then $g_\Phi$ defined by Corollary \ref{spin7cor} cannot be induced from a torsion free $\mathbb{T}^2$-invariant $SU(4)$-structure.
\end{Rem}
Before proceeding to the construction of explicit examples of $Spin(7)$ metrics, we first give a general existence result. Since this will be local in nature we shall without loss of generality set $A=1$. 
\subsection{A general local existence result}
Since $\tilde{\om}_1$ is a $1$-parameter family of K\"ahler forms there exists, on each suitably small open set $B \subset M^4$,  a K\"ahler potential $f:B\to \R$, depending on $s$ in a small interval, such that
\[ \tilde{\om}_1=d_Md_M^cf.\]
Thus, we may always solve equation (\ref{equ2}) by setting
\[u=\ddot{f}+\ddot{r}(s),\]
where $\dot{\ }$ refers to derivative with respect to $s$ and $\ddot{r}$ is a non-negative function of $s$ \textit{only}, chosen to ensure that $u$ is positive. Picking complex Darboux coordinates $(z_1,z_2)$ on $B$, we can express the $(2,0)$-form as
\[\om_2+i\om_3=\ dz_1 \w dz_2.\]
Defining $F:=f+r$, equation (\ref{equ1}) can now be expressed in these coordinates as
\begin{equation}
\Big(\frac{1}{s(s+c)}\Big)\ddot{F}=4 \det\Big(\frac{\partial^2}{\partial z_i \partial \bar{z}_j}F\Big)_{1\leq i,j\leq 2}  ,\label{mongeampere}
\end{equation}
where $\frac{\partial}{\partial z_j}:=\frac{1}{2}(\frac{\partial}{\partial x_j}+i\frac{\partial}{\partial y_j})$ for $j=1,2$. Under the assumption that we are given real analytic initial data to (\ref{mongeampere}), we may then appeal to the Cauchy-Kovalevskaya theorem for the general existence and uniqueness of a real analytic solution. 
\begin{Cor}
 Given a real analytic K\"ahler potential $F_{0}$ on (an open set of) a complex surface $(M^4,J_{1},\om_2+i\om_3)$ and an additional real analytic function $F_1$, then there exists a unique real analytic solution $F(s)$, for $s$ in a small interval, to (\ref{mongeampere}) with $F(0)=F_{0}$ and $\dot{ F}(0)=F_1$, and hence by Corollary \ref{spin7cor} a  torsion free $Spin(7)$-structure.
\end{Cor}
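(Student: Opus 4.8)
The plan is to treat equation~(\ref{mongeampere}) as a second order evolution equation in the variable $s$, with $s$ playing the role of ``time'', and to apply the Cauchy--Kovalevskaya theorem in the real analytic category. First I would rewrite (\ref{mongeampere}) so that the highest $s$-derivative is isolated: since $F = f + r$ and $u = \ddot F$ must be positive, we have $\ddot F = s(s+c)\cdot 4\det\big(\partial^2 F/\partial z_i \partial \bar z_j\big)$, and near $s=0$ the coefficient $s(s+c)$ is well behaved (here one uses that $c$ is chosen so that $s(s+c)>0$ on the relevant interval, consistent with Corollary~\ref{spin7cor}; at $s=0$ itself the right-hand side vanishes, which is harmless since we only need the Cauchy--Kovalevskaya normal form, not invertibility of the leading coefficient). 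Writing out the Hessian determinant in the real coordinates $x_j, y_j$ with $\partial/\partial z_j = \tfrac12(\partial/\partial x_j + i\,\partial/\partial y_j)$, the right-hand side is a polynomial (hence real analytic) function of the second-order spatial derivatives $\partial^2 F/\partial x_i \partial x_j$, $\partial^2 F/\partial x_i \partial y_j$, $\partial^2 F/\partial y_i \partial y_j$, multiplied by the analytic function $s(s+c)$. Thus the PDE is of the form $\ddot F = G(s, x, y, D^2_{x,y} F)$ with $G$ real analytic in all its arguments, which is precisely the hypothesis of Cauchy--Kovalevskaya after the standard reduction to first order in $s$.

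Second, I would invoke the Cauchy--Kovalevskaya theorem directly: given real analytic Cauchy data $F(0) = F_0$ and $\dot F(0) = F_1$ on the (real analytic) open set of $M^4$, there is a unique real analytic solution $F(s)$ defined for $s$ in a small interval around $0$. One point worth noting is that the spatial domain is an open subset of a complex surface rather than of $\R^4$; since $M^4$ is real analytic and we work in local real analytic coordinates (the complex Darboux coordinates $(z_1,z_2)$, equivalently $(x_1,y_1,x_2,y_2)$), this is not an obstruction: Cauchy--Kovalevskaya is a local statement and applies chart by chart, and uniqueness guarantees the local solutions agree on overlaps so they patch to a solution on the chosen open set.

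Third, I would shrink the interval of existence if necessary so that $u = \ddot F$ is positive: at $s = 0$ we have $u(0) = \ddot F(0) = \ddot r(0) + \ddot f(0)$, and by choosing the auxiliary function $\ddot r(s) \ge 0$ appropriately (as already allowed in the construction preceding the corollary) we may arrange $u(0) > 0$; by continuity $u > 0$ on a smaller interval. One then verifies that $F = f + r$ with $f$ the K\"ahler potential for $\tilde\om_1 = d_M d_M^c f$ satisfies both (\ref{equ1}) (which is exactly (\ref{mongeampere}) in coordinates) and (\ref{equ2}) (which reads $d_M d_M^c u = \partial^2_s \tilde\om_1$, automatic once $u = \ddot F$ and $\ddot r$ depends on $s$ only, since $d_M d_M^c \ddot r = 0$). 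With both equations of Corollary~\ref{spin7cor} satisfied, that corollary produces the torsion free $Spin(7)$-structure, completing the proof.

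The only genuinely delicate point is the behaviour of the leading coefficient $s(s+c)$ at $s=0$: one must make sure the reduction to Cauchy--Kovalevskaya normal form does not require dividing by it. I expect this to be a non-issue because in the normal form $\ddot F = G(s,\cdot)$ the function $G$ is allowed to vanish; the theorem needs $G$ analytic, not nonvanishing. Everything else is routine: expanding the determinant, checking analyticity, and citing the construction of Corollary~\ref{spin7cor} verbatim.
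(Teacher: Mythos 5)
Your core argument --- rewrite (\ref{mongeampere}) as $\ddot F = 4\,s(s+c)\det\big(\partial^2F/\partial z_i\partial\bar z_j\big)$, observe the right-hand side is real analytic in $s$ and the purely spatial second derivatives, and apply Cauchy--Kovalevskaya locally in a Darboux chart --- is exactly the paper's argument, which consists of nothing more than this appeal to Cauchy--Kovalevskaya (the vanishing of $s(s+c)$ at $s=0$ is indeed harmless, as you say, since the theorem only needs analyticity of the right-hand side, not non-degeneracy).

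However, your third step is wrong as written. Once $F$ solves the Cauchy problem, the splitting $F=f+r$ carries no freedom: $\tilde\om_1=d_Md_M^cF$ and $u=\ddot F$ are determined by $F$ alone (since $d_Md_M^c r=0$), so you cannot ``choose $\ddot r$ appropriately'' after solving to arrange $u(0)>0$. In fact the equation forces $u(0)=\ddot F(0)=4\cdot 0\cdot(0+c)\det(\cdot)=0$, so $u$ necessarily vanishes on the initial slice; this is consistent with the paper's examples, whose metrics degenerate (are incomplete) at $s=0$. The correct way to obtain positivity is from the hypothesis that $F_0$ is a K\"ahler potential: $\det\big(\partial^2F_0/\partial z_i\partial\bar z_j\big)>0$, so by continuity of the real analytic solution the determinant remains positive for $s$ in a small interval (after possibly shrinking the open set), and then $u=4\,s(s+c)\det>0$ and $\tilde\om_1(s)$ remains a K\"ahler form for $s\in(0,\epsilon)$ (with the standing sign convention on $c$ making $s(s+c)>0$ there). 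Equations (\ref{equ1}) and (\ref{equ2}) then hold by construction and Corollary \ref{spin7cor} applies on the region $s>0$, not on a neighbourhood of $s=0$. With this replacement of your positivity argument the proof is complete and coincides with the paper's.
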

\begin{Rem}
Thus, we have abstractly proven that there exists a large class of $Spin(7)$ metrics admitting K\"ahler reduction. Our general solution is determined by $2$ functions, namely the two initial conditions to (\ref{mongeampere}), of $4$ variables. By contrast, Bryant shows using Cartan-K\"ahler theory that an arbitrary $Spin(7)$ metric is determined by $12$ functions of $7$ variables cf. \cite{Bryant1987}. This difference is essentially due to the fact that the $\mathbb{T}^2$ symmetry together with K\"ahler condition has allowed us to reduce the general problem to a \textit{single} second order PDE involving a family of K\"ahler potentials. 
\end{Rem}
For the sake of concreteness, we shall now investigate special cases when the pair (\ref{equ1}) and (\ref{equ2}), or equivalently (\ref{mongeampere}), can be solved explicitly.
\section{Constant solutions $\mathrm{I}$}\label{constant}
In this section we consider the simplest case in Corollary \ref{spin7cor} when $u$ is only a function of $s$ i.e. $d_Mu=0$. We show that one can find a large class of examples of $Spin(7)$ metrics simply by choosing suitable constants.

Assuming $d_Mu=0$, we solve equation (\ref{equ2}) to get
\[ \tilde{\om}_1=s\check{\om}_0 + \hat{\om}_0 ,\]
where $\check{\om}_0$ and $\hat{\om}_0$ are closed $2$-forms on $M^4$, independent of $s$. It is well-known that the wedge product on $\Lm^2:=\Lm^2(TM)$ defines a non-degenerate symmetric bilinear form $B$ of signature $(3,3)$ given by
\begin{align*}
	S^2(\Lm^2)  &\to \Lm^4 \cong \R \\
	(\beta_1,\beta_2) &\mapsto B(\beta_1, \beta_2)\theta,
\end{align*}
where $\theta:=\frac{1}{2}\om_2 \w \om_2=\frac{1}{2}\om_3 \w \om_3$. The orientation form $\theta$ on $M^4$ allows for a splitting 
$$\Lm^2=\Lm^2_+ \oplus \Lm^2_-,$$ 
with $B$ positive definite on $\Lm^2_+$ and negative definite on $\Lm^2_-$. 
Restricting $B$ to the $2$-plane in $\Lm^2$ spanned by $\langle \check{\om}_0, \hat{\om}_0 \rangle$, it follows from the theory of four-manifolds, cf. \cite[Chap. 7]{Salamon1989}, together with the fact that $\tilde{\om}_1$ is of type $(1,1)$ and $\om_2+i\om_3$ of type $(2,0)$  that there exist $(1,1)$-forms $\om_0$ and $\om_1$ spanning $\langle \check{\om}_0, \hat{\om}_0 \rangle$, and functions $a,b,p,q$ on $M^4$ such that
\begin{gather}\label{symplectic}
	\tilde{\om}_1=(a+bs)\om_0+(p+qs)\om_1,
\end{gather}
where,
\begin{gather*}
	\frac{1}{2}\om_0 \w \om_0 = - \theta, \ \ \  \frac{1}{2}\om_1 \w \om_1 =  \theta, \ \ \ \om_0 \w \om_1 = 0.
\end{gather*}  
From equation (\ref{equ1}) we have that 
\begin{equation} u=\frac{s(s+c)}{A}\cdot ((p+qs)^2-(a+bs)^2) ,\label{uequation}\end{equation}
and the fact that $\tilde{\om}_1$ defines a positive definite metric implies that we need
\begin{equation} p+qs > |a+ bs|  .\label{inequality}\end{equation}
Furthermore our hypothesis that $d_Mu=0$ forces $a,b,p,q$ to be constants. Hence it follows that the triple $(\om_1,\om_2,\om_3)$ defines a hyperK\"ahler structure on $M^4$, while $\om_0$ is a closed anti-self-dual $2$-form.
The $\mathbb{T}^3_{\al,\xi,\eta}$ bundle on $M^4$ is then determined by
\begin{equation}\label{curvatureforms} 
 (d\alpha, d\xi,d\eta)=(A\cdot(b\om_0 + q \om_1),-\om_2,-A\cdot \om_3) . 
\end{equation}
A trichotomy of the total space of the $\mathbb{T}^3$ bundle on given $M^4$ arises from whether $b>q$, $b=q$ or $b<q$. 
We can summarise the above into:
\begin{Th}\label{constantthm}
Given a hyperK\"ahler four-manifold $(M^4, g_{M}, \om_1,\om_2,\om_3)$ with an almost K\"ahler form $\om_0$, compatible with $g_{M}$ but inducing the opposite orientation, and suppose that the constants $(a,b,p,q)$ and the function $u(s)$ satisfy (\ref{inequality}) and (\ref{uequation}) respectively, then
\begin{align*}
\Phi =\ &\eta \w \Big(\xi \w (\tilde{\om}_1+\frac{1}{A} ds \w \al) +\frac{(s+c)}{A}(\om_2 \w \al)+\frac{u\cdot (s+c)}{A^2}(\om_3 \w ds)\Big)\\
+\ &\frac{s^{2}(s+c)^2}{A^2}\Big( \frac{1}{2}\tilde{\om}^2_1+\frac{1}{A}(\tilde{\om}_1 \w ds\w \al)  \Big) -s\cdot \xi \w (\om_3 \w \al - \om_2 \w \frac{u}{A}ds )
\end{align*}
defines a torsion free $Spin(7)$-structure on the product of the $\mathbb{T}^3$ bundle determined by (\ref{curvatureforms}) and $I^+_s$. For the case when $a=b=0$ we can set $\om_0=0$.
\end{Th}
We shall refer to examples arising from Theorem \ref{constantthm} as constant solutions. To exhibit the user-friendliness of the above theorem we shall give several explicit examples in the next two sections. In particular we consider $M^4=\mathbb{T}^4$ with the flat hyperK\"ahler structure, in which case it is well-known that the $\mathbb{T}^3_{\al,\xi,\eta}$ bundles correspond to certain $2$-step nilmanifolds cf. \cite{PalaisTorusbundle}. 
\section{Examples with holonomy $Spin(7)$, $G_2$, $SU(3)$ and $SU(4)$.}\label{example1}
Our aim here is to describe certain special holonomy metrics admitting K\"ahler reductions. In particular, the $Spin(7)$ metrics described in this section arise from Theorem \ref{constantthm} with $a=b=0$. We shall also explain how all these metrics can be obtained via a generalised version of the Calabi ansatz.
\subsection{The GLPS examples} 
In \cite{GibbonsKahler} Gibbons, L\"u, Pope and Stelle (GLPS) found explicit examples of $G_2$ and $Spin(7)$ holonomy metrics on bundles over $\mathbb{T}^4$. Both of these examples admit K\"ahler reductions as described in section \ref{kahlerreduction}: the $G_2$ example being a special case of $s=1$ cf. \cite{Apostolov2003, ChiossiSalamonIntrinsicTorsion}.
Their $Spin(7)$ example arises from Theorem \ref{constantthm} when $M=\mathbb{T}^4$ with $c=0$, $A=1$ and $(a,b,p,q)=(0,0,0,1)$. Choosing different integers $A$ amounts to pulling back their $Spin(7)$ $4$-form $\Phi$ to covers of the circle bundle determined by $d\eta$. The induced $Spin(7)$ metric is then rescaled by the factor $A^{1/2}$ on the covering space. We describe below these examples, as well as certain related Calabi-Yau metrics, in more detail . An interesting feature in all the following examples is that the symplectic form on $P^6$ is always the same but the complex structure (on the $\C^\times$ fibre) changes. Put differently, this means that each example below corresponds to a different integrable section of the associated bundle on $(P^6,\om)$ with fibre $\frac{Sp(6,\R)}{SU(3)}$.\\

\noindent\textbf{Spin(7): }Let $P^6=Q^5 \times \R_t$, where $Q^5$ is a nilmanifold whose Lie algebra is given by
\[(0,0,0,0,13+42) \]
in Salamon's notation \cite{Salamoncomplexstructures}. We remind the reader that a nilmanifold is the quotient of a simply connected nilpotent Lie group by a cocompact lattice whose existence in this case is guaranteed by Malcev's criterion cf. \cite{latticeliegpsraghunathan2012}. So we can choose a coframing $e^i$ on $Q^5$ satisfying 
\begin{gather*}
de^5=e^{13}+e^{42},\\
de^i=0, \ \text{ for\ } i=1,2,3,4
\end{gather*}
where $e^{ij}:=e^i \w e^j$. We define a K\"ahler $SU(3)$-structure on $P^6$ by
\begin{gather*}
\om = d(t\cdot e^5),\\
\Om= t(\sigma_3 +i \sigma_1)\w (-t^{-2}e^5+it^2dt),
\end{gather*}
where $\sigma_1:=e^{12}+e^{34}$, $\sigma_2:=e^{13}+e^{42}$ and $\sigma_3:=e^{14}+e^{23}$ denote the standard self-dual $2$-forms on $\mathbb{T}^4$.
The torsion forms, cf. Lemma \ref{torsionlemma}, are then given by
\begin{gather*}
d\Om^+ = -t^{-1}dt\w \Om^+,\\
d\Om^- = t^{-5}e^5 \w \Om^+.
\end{gather*}
Taking $H=t^{4/3}$ and $s=t$, we have $d\xi=\sigma_3$ and $d\eta=\sigma_1$. Hence from Proposition \ref{spin7thm} it follows that $\Phi$ is torsion free and the induced metric is given by
\[g_\Phi=t^6dt^2+t^{-2}(\xi^2 + \eta^2 + (e^5)^2)+t^3((e^1)^2+(e^2)^2+(e^3)^2+(e^4)^2).\] 
In fact one can verify that the holonomy group is \textit{equal} to $Spin(7)$, using \textsc{Maple} for instance. This can be done by verifying that the dimension of the holonomy algebra, or equivalently by the Ambrose-Singer Theorem the rank of the curvature operator, is equal to $21$. 

\noindent\textbf{G$_2$: }If we keep $\om$ on $P^6$ unchanged but modify the complex structure so that
\[\Om= t(\sigma_3 +i \sigma_1)\w (-t^{-3/2}e^5+it^{3/2}dt) \]
and take $H=t$ and $s=1$, then from Proposition \ref{g2thm} we see that $\vp$ is torsion free. The induced metric is given by
\[g_\vp=t^4dt^2+t^{-2}(\xi^2  + (e^5)^2)+t^2((e^1)^2+(e^2)^2+(e^3)^2+(e^4)^2).\] 
Here $d\xi$ is again defined as in the $Spin(7)$ example. One can once again verify that the holonomy group of $g_\vp$ is \textit{equal} to $G_2$ cf. \cite{Apostolov2003, GibbonsKahler}.

\noindent\textbf{CY: }Following the same strategy, it is easy to see that we obtain a torsion free $SU(3)$-structure by keeping $\om$ unchanged and taking
\[ \Om =t(\sigma_3 +i \sigma_1)\w (-t^{-1}e^5+it^{1}dt).  \]
The induced metric is given by 
\[g_\om=t^2dt^2+t^{-2} (e^5)^2+t((e^1)^2+(e^2)^2+(e^3)^2+(e^4)^2).\] 
Note that from \cite[theorem $3.6$]{UdhavFowdar}, we can also construct a metric with holonomy $SU(4)$ from this Calabi-Yau $3$-fold. The $SU(4)$-structure on $\hat{N}^8$ is given by
\begin{gather*}
	\hat{\om}=s^{2/3}\om + \hat{\eta} \w d(s^{2/3}),\\
	\hat{\Om}=\Om \w (-\hat{\eta} - i \frac{2}{3}s^{5/3}ds ),
\end{gather*}
with $d\hat{\eta}=-\om$. Topologically $\hat{N}^8=\hat{L}^7 \times \R_s^+$, where $\hat{L}^7$ is the $S^1$ bundle determined by $[-\om]\in H^2(P^6,\mathbb{Z})$. This gives an example of a cohomogeneity two Einstein metric. Explicitly it is given by
\[ \hat{g}=s^{2/3}(t^2  dt^2+t^{-2}(e^5)^2+tg_{\mathbb{T}^4})+s^{-2}\hat{\eta}^2+(\frac{2}{3}s^{2/3}ds)^2 .\]
By analogy to our construction, this can also be viewed as an `inversion' of the K\"ahler reduction, from a CY $3$-fold to a CY $4$-fold, where the moment map is $s^{2/3}$. 
\subsection{$Spin(7)$ metrics from Gibbons-Hawking ansatz}\label{GibbonsHawkingAnsatz}
It is clear that one can replace $\mathbb{T}^4$ by any hyperK\"ahler manifold $M^4$ in all the above examples. Although it is not generally true that the triple of hyperK\"ahler forms define integral cohomology classes this is nonetheless always true locally. Thus, combined with the Gibbons-Hawking ansatz this gives infinitely many local examples of metrics with holonomy $Spin(7)$ starting from just a positive harmonic function on an open set in $\mathbb{R}^3$. 

More precisely, given an open set $B \subset \R^3$ with the Euclidean metric and coordinates $(x,y,z)$, together with a positive harmonic function $V:B\to \R^+$ satisfying the integrality condition $[-*dV]\in H^2(B,\mathbb{Z})$, then we can define a hyperK\"ahler triple on the total space $M^4$ of  the circle bundle by
\begin{align*}
\om_1=\theta \w dx + V\ dy \w dz, \\
\om_2=\theta \w dy + V\ dz \w dx,\\
\om_3=\theta \w dz + V\ dx \w dy,
\end{align*}
where $\theta$ is a connection $1$-form satisfying $d\theta=-*dV$.

\begin{Cor}\label{GibbonsHawkingCor1}
Given a hyperK\"ahler four-manifold $(M^4,g_M, \om_1, \om_2, \om_3)$ such that $[\om_1],[-\om_2],[-\om_3] \in H^2(M,\mathbb{Z})$, let $K^7$ denote the total space of the $\mathbb{T}^3$ bundle determined by the latter triple. Then we can define a metric with holonomy contained in $Spin(7)$ on $K^7 \times \R^+_s$ by
\begin{align}
g_\Phi=s^{-2}\eta^2&+(s+c)^{-2}\xi^2+(s+p)^{-2}\alpha^2\label{gh}\\&+s^2(s+c)^2(s+p)^2\ ds^2+{s(s+c)(s+p)}g_{M}\nonumber ,\end{align}
where constants $c$, $p \in [0,+\infty)$ and the connection $1$-forms $\alpha$, $\eta$, $\xi$ satisfy
 $$(d\alpha, d\xi,d\eta)= (\om_1,-\om_2,- \om_3).$$
 Moreover, if $M^4$ admits a triholomorphic $S^1$ action then we can locally write
 \[g_M=V^{-1}\theta^2+V(dx^2+dy^2+dz^2)  \]
via the Gibbons-Hawking ansatz and hence $g_\Phi$ is completely determined by $V$.
\end{Cor}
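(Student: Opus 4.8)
The plan is to deduce Corollary \ref{GibbonsHawkingCor1} as the special instance of Corollary \ref{spin7cor} --- equivalently, of the constant solutions of Section \ref{constant} --- in which the anti-self-dual summand $\om_0$ of $\tilde\om_1$ is switched off. Concretely, I would set $A=1$, $a=b=0$, $q=1$ in (\ref{symplectic}), so that $\tilde\om_1=(s+p)\om_1$ and the hyperK\"ahler triple on $M^4$ is exactly the given $(\om_1,\om_2,\om_3)$. Equation (\ref{equ1}) then forces $u=s(s+c)(s+p)^2$; since $c,p\ge 0$ and $s>0$ this is positive, the inequality $p+qs>|a+bs|$ reduces to $s+p>0$ which holds, and from (\ref{ODE1}) one has $H=s^{1/3}(s+c)>0$, so $\R^+_H\cong\R^+_s$ and all the positivity hypotheses of Corollary \ref{spin7cor} are satisfied.

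Next I would check the two structure equations of Corollary \ref{spin7cor}. Equation (\ref{equ2}) is immediate: $u$ depends on $s$ alone so $d_Md^c_Mu=0$, while $\tilde\om_1$ is affine in $s$ so $\partial_s^2\tilde\om_1=0$. For (\ref{equ1}) one uses the hyperK\"ahler normalisation $\tfrac12\om_1\w\om_1=\tfrac12\om_2\w\om_2=\tfrac12\om_3\w\om_3=\theta$ and $\om_i\w\om_j=0$ for $i\ne j$, so that $\tfrac12 u(\om_2+i\om_3)\w(\om_2-i\om_3)=2u\,\theta$ and $s(s+c)\,\tilde\om_1\w\tilde\om_1=2s(s+c)(s+p)^2\,\theta$, and these agree by the choice of $u$. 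The curvature forms are then read off from (\ref{curvatureforms}): $(d\al,d\xi,d\eta)=(b\om_0+q\om_1,-\om_2,-\om_3)=(\om_1,-\om_2,-\om_3)$, which is exactly the prescription in the statement, and the integrality hypothesis $[\om_1],[-\om_2],[-\om_3]\in H^2(M,\mathbb{Z})$ is precisely what is needed for $\al,\xi,\eta$ to exist as connection $1$-forms on the $\mathbb{T}^3$-bundle $K^7\to M^4$ (which, since the curvatures are pulled back from $M^4$, is such that the $N^8$ of Corollary \ref{spin7cor} is $K^7\times\R^+_s$). Finally, substituting $A=1$, $u=s(s+c)(s+p)^2$ and $g_{\tilde\om_1}=(s+p)g_M$ into the metric formula of Corollary \ref{spin7cor} and simplifying the coefficients of $\xi^2$, $\al^2$, $ds^2$ and $g_M$ gives exactly (\ref{gh}); torsion-freeness of the resulting $Spin(7)$-structure, hence $\mathrm{Hol}\subseteq Spin(7)$, is the conclusion of Corollary \ref{spin7cor}.

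For the last sentence I would invoke the standard local structure theory of hyperK\"ahler $4$-manifolds carrying a triholomorphic circle action: on the open set where the action is free (and automatically tri-Hamiltonian) the metric is the Gibbons-Hawking ansatz $g_M=V^{-1}\theta^2+V(dx^2+dy^2+dz^2)$ for a positive harmonic $V$ on an open set $B\subset\R^3$ and a connection $1$-form $\theta$ with $d\theta=-*dV$, the hyperK\"ahler triple being the one displayed just before the corollary. Since $\theta$ is determined by $V$ up to gauge, the entire datum $(M^4,g_M,\om_1,\om_2,\om_3)$, and therefore $g_\Phi$ via (\ref{gh}), is determined by $V$ alone.

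The computations are all routine; the only real obstacle is bookkeeping --- keeping straight the specialisation of the constants $A,a,b,p,q$, the sign and duality conventions (the convention $\tfrac12\om_i\w\om_i=\theta$, and $*dV$ versus $-*dV$), and the fact that the K\"ahler reduction, the bundle construction and the integrality of the $\om_i$ are only valid on the locus of regular values, i.e.\ locally, which is exactly what the statement asserts.
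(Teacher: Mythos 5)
Your proposal is correct and follows exactly the route the paper takes: Corollary \ref{GibbonsHawkingCor1} is obtained as the constant solution of Section \ref{constant} with $A=1$, $(a,b,q)=(0,0,1)$, so that $\tilde\om_1=(s+p)\om_1$, $u=s(s+c)(s+p)^2$, the curvature forms specialise to $(\om_1,-\om_2,-\om_3)$, and the metric of Corollary \ref{spin7cor} reduces to (\ref{gh}). Your verification of (\ref{equ1})--(\ref{equ2}), the positivity and integrality hypotheses, and the final Gibbons--Hawking remark all match the paper's (largely implicit) argument.
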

\noindent The metric (\ref{gh}) corresponds to constant solutions with $A=1$, $(a,b,q)=(0,0,1)$ and is defined for $s\in (0,+\infty)$. This metric is incomplete as $s\to 0$ since the circle fibre corresponding to the connection form $\eta$ always blows up while the length of the other two circles fibres converge to $c^{-1}$ and $p^{-1}$ as $s\to 0$. It is not hard to see that $g_\Phi$ is complete as $s \to \infty$. The family of metrics given by (\ref{gh}), especially in the case when $M^4$ is $\mathbb{T}^4$ or a suitable $K3$ surface, might be useful in future gluing problems as in the hyperK\"ahler case recently investigated in \cite{hsvz}.\\

\noindent\textbf{A remark on the `generalised Calabi ansatz.'}
The $SU(3)$ and $SU(4)$ holonomy metrics appearing in this section in fact arise from a special case of the Calabi construction \cite{Calabi1957}. In our setting this can be neatly described as follows: given a Calabi-Yau $n$-fold $\hat{N}^{2n}$ with symplectic form $\sigma$ and holomorphic volume form $\Psi$ we define a connection $1$-form $\gamma$ on the line bundle $L_{\hat{N}}$ with Chern class determined by
\[ d\gamma=-\sigma .\]
We then obtain a torsion free $SU(n+1)$-structure on an open set of the total space $L_{\hat{N}}$ given by
\begin{gather*}
	\hat{\sigma}=-d(r^2 \gamma),\\
	\hat{\Psi}= \Psi \w ( \gamma + i \frac{2}{n+2} d(r^{n+2})),
\end{gather*}
where $r$ denotes a radial coordinate from the zero section. The examples above can thus be interpreted as a `\textit{generalised Calabi ansatz}' for exceptional holonomy metrics, whereby one uses the hyperK\"ahler forms $\om_1,\om_2$ and $\om_3$ in succession to construct $SU(3)$, $G_2$ and $Spin(7)$ holonomy metrics. The relations between the Einstein manifolds described above can be expressed in the diagram below. 
\begin{center}
\begin{tikzcd}[column sep=tiny]
& (N^8,\Phi) \ar[dr,"S^1"] \ar[dl,"\mathbb{T}^2" '] \ar[dd,"\mathbb{T}^3\times \R^+"]
&
&[1.5em] \\
(P^6,\omega,\Omega) \ar[dr,"\C^\times" ']
&
& (L^7,\varphi) \ar[dl,"\mathbb{T}^2\times \R^+"] \\
& (M^4,\omega_1,\omega_2,\omega_3) 
&
\end{tikzcd}
\end{center}
The submersions of $P^6$ and $L^7$ with their Ricci flat metric to $M^4$ correspond to the Calabi ansatz and Apostolov-Salamon construction, respectively. 
\iffalse The relations between the Einstein manifolds described above can be expressed in the left diagram below. \\
\begin{tikzcd}[column sep=tiny]
	& (N^8,\Phi) \ar[dr,"S^1"] \ar[dl,"\mathbb{T}^2" '] \ar[dd,"\mathbb{T}^3\times \R^+"]
	&
	&[1.5em] \\
	(P^6,\omega,\Omega) \ar[dr,"\C^*" ']
	&
	& (L^7,\varphi) \ar[dl,"\mathbb{T}^2\times \R^+"] \\
	& (M^4,\omega_1,\omega_2,\omega_3) 
	&
\end{tikzcd}
\begin{tikzcd}[column sep=tiny]
& \mathcal{C(S)} \ar[dr,"S^1"] \ar[dl,"\mathbb{C}^*" '] \ar[dd,"\Q^*/\mathbb{Z}_2"]
&
&[1.5em] \\
\mathcal{Z(M')}\ar[dr,"S^2" ']
&
&\ \ \mathcal{S(M')} \ar[dl,"\R P^3"] \\
& \mathcal{M'}
&
\end{tikzcd}

The diagram on the right describes a strikingly similar looking relations between different Einstein manifolds; where $\mathcal{M'}$ is a quaternionic K\"ahler manifold with positive scalar curvature, $\mathcal{Z(M')}$ its twistor space, $\mathcal{S(M')}$ is the associated $3$-Sasakian manifold and $\mathbb{C}$ is  the Swann bundle or hyperK\"ahler cone.\fi

By contrast to the above examples, where all the circle bundles were determined by the hyperK\"ahler forms (since we had $d\al=\sigma_2$), in the next section we give examples corresponding to the case when $b \neq 0$. 
\section{More examples}\label{example2}
The $G_2$ example we describe in this section has also appeared in \cite{Apostolov2003, ChiossiSalamonIntrinsicTorsion} but the $Spin(7)$ metric does not seem to have been mentioned in the literature. 

Let $P^6=Q^5 \times \R_t$, where $Q^5$ is a nilmanifold whose Lie algebra is given by
\[(0,0,0,0,24) .\]
So $Q^5$ is again topologically a circle bundle over $\mathbb{T}^4.$ We define a K\"ahler $SU(3)$-structure on $P^6$ by
\begin{gather*}
\om = e^{13}-d(t^2 e^5),\\
\Om= t(-\sigma_1+i\sigma_3 )\w (-2t^{3}dt+it^{-2}e^5),
\end{gather*}
where $\sigma_i$ denote the standard self-dual $2$-forms as before.
The torsion forms are given by
\begin{align*}
d\Om^+ &= -t^{-1}dt\w \Om^+,\\
d\Om^- &= -\frac{1}{2}t^{-6}e^5 \w \Om^+.
\end{align*}
Taking $H=t^2$ so that $d\xi=-\sigma_3$, one can verify directly that the hypothesis of Proposition \ref{g2thm} are satisfied. We thus get a holonomy $G_2$ metric as described in \cite{Apostolov2003}. This was also shown to arise from the Hitchin flow of half-flat $SU(3)$-structures in \cite{ChiossiSalamonIntrinsicTorsion}. 

Following the same strategy as in the GLPS examples of last section we keep $\om$ unchanged and consider
\[\Om= t(-\sigma_1+i\sigma_3 )\w (-2t^{4}dt+it^{-3}e^5) .\]
Taking $H=t^{8/3}$ we see that the hypothesis of Proposition \ref{spin7thm} are satisfied, with $d\eta=-\sigma_1$ and $d\xi=-\sigma_3$. Thus we get a $Spin(7)$ metric explicitly given by
\[g_\Phi= (t^2e^1)^2 + (t^3e^2)^2+(t^2e^3)^2+(t^3e^4)^2+(t^{-1}e^5)^2+(t^{-2}\eta)^2+(t^{-2}\xi)^2+4t^{12}dt^2 .\]
Again one verifies by computing the dimension of the holonomy algebra that the holonomy is indeed \textit{equal} to $Spin(7)$.
Of course, we can also construct holonomy $SU(3)$ and $SU(4)$ metrics by carrying out an analogous argument as in the previous section.
\subsection{$Spin(7)$ metrics from Tod's ansatz}\label{GHansatz}
As in section \ref{example1} there is a natural way of obtaining many local examples using the so-called Tod's ansatz cf. \cite[Prop. $3.1$]{ArmstrongTodansatz}. The idea is again to apply the Gibbons-Hawking ansatz but choosing the harmonic function to depend on only two variables. 

In the notation of subsection \ref{GibbonsHawkingAnsatz}, if we choose $V:B\to \R^+$, independent of say coordinate $x$, then in addition to the Gibbons-Hawking hyperK\"ahler triple, we can also define an almost K\"ahler form by
\[\om_0=\theta \w dx - V\ dy \w dz .\]
 Thus, we can again appeal to Theorem \ref{constantthm} to construct $Spin(7)$ metrics. In particular, for $A=1$ and $(a,b,q)=(0,1,1)$ we have:
\begin{Cor}\label{GibbonsHawkingCor2}
Given a hyperK\"ahler four-manifold $(M^4,g_M, \om_1, \om_2, \om_3)$ together with an almost K\"ahler form $\om_0$ compatible with the opposite orientation such that $[\om_0+\om_1],[-\om_2],[-\om_3] \in H^2(M,\mathbb{Z})$, let $K^7$ denote the total space of the $\mathbb{T}^3$ bundle determined by the latter triple. Then we can define a metric with holonomy contained in $Spin(7)$ on $K^7 \times \R^+_s$ by
\begin{align*}
g_\Phi=s^{-2}\eta^2&+(s+c)^{-2}\xi^2+p^{-1}(2s+p)^{-1}\alpha^2\\&+ps^2(s+c)^2(2s+p)\ ds^2+{s(s+c)}g_{\tilde{\om}_1} ,\end{align*}
where constants $c$, $p \in (0,+\infty)$, $g_{\tilde{\om}_1}$ is defined by (\ref{symplectic}) and the connection $1$-forms $\alpha$, $\eta$, $\xi$ satisfy
$$(d\alpha, d\xi,d\eta)= (\om_0+\om_1,-\om_2,- \om_3).$$
Moreover, if $M^4$ admits a triholomorphic $S^1$ action then $g_\Phi$ is completely determined by the harmonic function $V(y,z)$, as in Tod's ansatz.  
\end{Cor}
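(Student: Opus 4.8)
The plan is to recognize Corollary \ref{GibbonsHawkingCor2} as the specialization of the constant-solution analysis of section \ref{constant} to the choice of parameters $A=1$, $c$ arbitrary (positive), $(a,b,p,q)=(0,1,p,1)$, together with the observation that Tod's ansatz produces exactly the data $(\om_0,\om_1,\om_2,\om_3)$ required there. First I would record that, by the discussion around \eqref{symplectic}, the $1$-parameter family of K\"ahler forms is $\tilde{\om}_1=(a+bs)\om_0+(p+qs)\om_1=s\,\om_0+(p+s)\om_1$, where $\om_0$ is anti-self-dual (compatible with the opposite orientation) and $(\om_1,\om_2,\om_3)$ is hyperK\"ahler; this is precisely the positivity-of-$u$ requirement $p+qs>|a+bs|$, i.e. $p+s>s$, which holds for all $s>0$ since $p>0$. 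Then \eqref{equ1} gives $u=\frac{s(s+c)}{A}\big((p+qs)^2-(a+bs)^2\big)=s(s+c)\big((p+s)^2-s^2\big)=s(s+c)\,p(2s+p)$, so $u$ depends only on $s$ and hence $d_Mu=0$, which makes \eqref{equ2} trivially satisfied (both sides vanish because $\tilde{\om}_1$ is affine in $s$ and $u$ is constant on $M$). Feeding these into the metric formula of Corollary \ref{spin7cor},
\[
g_\Phi=s^{-2}\eta^2+\frac{A^2}{(s+c)^2}\xi^2+\frac{s(s+c)}{A\,u}\alpha^2+\frac{s(s+c)u}{A^3}\,ds^2+\frac{s(s+c)}{A}g_{\tilde{\om}_1},
\]
and substituting $A=1$ and $u=ps(s+c)(2s+p)$ collapses the $\alpha^2$, $ds^2$ coefficients to $p^{-1}(2s+p)^{-1}$ and $ps^2(s+c)^2(2s+p)$ respectively, giving the stated expression; the curvature identities $(d\alpha,d\xi,d\eta)=(b\om_0+q\om_1,-\om_2,-A\om_3)=(\om_0+\om_1,-\om_2,-\om_3)$ are read off directly from \eqref{curvatureforms}, and the integrality hypothesis $[\om_0+\om_1],[-\om_2],[-\om_3]\in H^2(M,\mathbb{Z})$ is exactly \eqref{curvatureforms}'s requirement that these forms define $\mathbb{T}^3$-bundles. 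Since Corollary \ref{spin7cor} already guarantees the resulting $\Phi$ is a torsion free $Spin(7)$-structure, the holonomy is contained in $Spin(7)$.

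For the final clause I would invoke Tod's ansatz (\cite[Prop.~3.1]{ArmstrongTodansatz}, recalled in subsection \ref{GHansatz}): take $V:B\to\R^+$ harmonic on an open set $B\subset\R^3$ but independent of the coordinate $x$, choose $\theta$ with $d\theta=-*dV$, and set $\om_i=\theta\w dx_i+V\,dx_j\w dx_k$ (cyclically) for the hyperK\"ahler triple and $\om_0=\theta\w dx-V\,dy\w dz$ for the extra anti-self-dual form. One checks $\om_0$ is closed (using $\partial_xV=0$ and $d\theta=-*dV$), is of type $(1,1)$ for the opposite orientation, and satisfies $\tfrac12\om_0\w\om_0=-\theta$, $\om_0\w\om_1=0$ — so it fits the algebraic constraints on $\om_0$ in section \ref{constant}. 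The metric $g_M=V^{-1}\theta^2+V(dx^2+dy^2+dz^2)$ is then the Gibbons-Hawking form, and since every object entering $g_\Phi$ (the forms $\om_0,\om_1,\om_2,\om_3$, the potentials $g_{\tilde{\om}_1}$, the connection forms) is built algebraically from $V$ and $\theta$, and $\theta$ is itself determined by $V$ up to gauge, the whole metric $g_\Phi$ is determined by the single harmonic function $V(y,z)$.

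I expect the only genuine subtlety — and hence the main thing to be careful about rather than a true obstacle — is verifying that $\om_0$ constructed via Tod's ansatz is indeed a closed $(1,1)$-form for the reversed orientation and pairs correctly with the hyperK\"ahler triple ($B$-orthogonal to $\om_1$, self-pairing $-\theta$), so that the decomposition \eqref{symplectic} genuinely applies with $\om_0\neq0$; this is a short but not entirely mechanical check because it is exactly the point where "$\partial_xV=0$" is used. Everything else is bookkeeping: substituting the chosen constants into the formulas of Corollary \ref{spin7cor} and simplifying. I would therefore organize the proof as: (i) identify the parameter choice and check $u>0$; (ii) quote Corollary \ref{spin7cor} to get torsion-freeness and simplify the metric and curvature forms; (iii) for the last statement, recall Tod's ansatz, verify the properties of $\om_0$, and note the resulting dependence on $V(y,z)$ alone.
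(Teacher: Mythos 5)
Your proposal is correct and follows essentially the same route as the paper: Corollary \ref{GibbonsHawkingCor2} is exactly the constant solution of section \ref{constant} with $A=1$, $(a,b,q)=(0,1,1)$, fed through Corollary \ref{spin7cor}, with Tod's ansatz ($V$ harmonic and independent of $x$) supplying the extra closed anti-self-dual form $\om_0=\theta\w dx - V\,dy\w dz$. Your computation of $u=ps(s+c)(2s+p)$, the resulting metric coefficients, the curvature forms from (\ref{curvatureforms}), and the check that $\om_0$ satisfies the algebraic constraints of (\ref{symplectic}) all match the paper's (largely implicit) argument.
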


\section{Hypersurfaces and Hitchin flow}\label{hypersurfaces}

In this section we explain how the aforementioned metrics may also be obtained by evolving suitable cocalibrated $G_2$-structures. It is well-known that an oriented hypersurface $\tilde{L}$ in a $Spin(7)$ manifold $(N^8,\Phi)$ inherits a cocalibrated $G_2$-structure defined by
\[ \phi= \mathbf{n} \lrcorner \ \Phi ,\]
where $\mathbf{n}$ denotes the unit normal vector field. As a converse Hitchin shows that given a cocalibrated $G_2$-structure $\phi_0$ on a compact seven-manifold $\tilde{L}$ one can define a torsion free $Spin(7)$-structure on $N=\tilde{L}\times (0,T)$ by solving the system
\begin{gather}
d_{\tilde{L}}(*_{\phi_t}\phi_t)=0,\label{hit1}\\
\frac{\partial}{\partial t}(*_{\phi_t}\phi_t)=d_{\tilde{L}}\phi_t,\label{hit2}
\end{gather}
where $t\in (0,T)$, \cite[Theorem 7]{Hitchin01stableforms}. Furthermore, Bryant shows that if $\phi_0$ is real analytic then there always exists a local solution to (\ref{hit1}) and (\ref{hit2}), cf.  \cite[Theorem 7]{BryantEmbedding}. The resulting $Spin(7)$ form on $N^8$ is then given by
\[\Phi=dt \w \phi_t + *_{\phi_t}\phi_t. \]
There is also an analogous theory for oriented hypersurfaces in $G_2$ manifolds \cite[Theorem 8]{Hitchin01stableforms}. In fact the $G_2$ holonomy metrics appearing in the last two sections have been described via this technique in \cite{ChiossiSalamonIntrinsicTorsion}. 

We shall now explain how the $Spin(7)$ examples corresponding to constant solutions may also be obtained via the Hitchin flow. From the definition of $\alpha$ and the expression relating the metrics $g_\Phi$ and $g_\om$, it is straightforward to compute
\[\|dH \|_{g_\Phi}= \frac{A}{u^{1/2}H^{1/2}s^{1/3}s'} .\]
Thus, from the expression for $g_\Phi$ in Corollary \ref{spin7cor} we see that one can define a geodesic coordinate $t$ on $N^8$  by
\[t=\frac{1}{A^{3/2}} \int s(s+c)( (p+qs)^2-(a+bs)^2  )^{1/2}ds. \]
The hypersurfaces $\tilde{L}_t$ in $N^8$ corresponding to level sets of $t$ are the $\mathbb{T}^3_{\al,\xi,\eta}$ bundles over $M^4$ defined by (\ref{curvatureforms}) and are endowed with cocalibrated $G_2$-structures $\phi_t$. From our expression for $\Phi$ we have that
\[*_{\phi_t}\phi_t=\eta \w \xi \w \tilde{\om}_1+\Big(\frac{s+c}{A}\Big)\eta\w\alpha \w \om_2+\frac{s^{2}(s+c)^2}{2A^2}\tilde{\om}_1\w \tilde{\om}_1 + s\alpha \w \xi \w \om_3 .  \]
It is easy to see, from the expressions of the curvature forms (\ref{curvatureforms}), that (\ref{hit1}) holds. We leave it to the interested reader to verify that (\ref{hit2}) is also satisfied.

For instance, in the GLPS $Spin(7)$ example we find that $4t=H^3=s^4$ and an orthonormal coframing for $\phi_t$ is given by
\[ s^{3/2}e^{1},\ \ s^{3/2}e^{2},\ \ s^{3/2}e^{3},\ \ s^{3/2}e^{4},\ \ s^{-1}\eta,\ \ s^{-1}\xi,\ \ s^{-1}\al . \]
\begin{Rem}
Although there are many cocalibrated $G_2$-structures on nilmanifolds, the scarcity of explicit metrics with holonomy \textit{equal} to $Spin(7)$ stems from the fact that the Hitchin flow is generally hard to solve and moreover, it often leads to $SU(4)$ holonomy metrics rather than $Spin(7)$ cf. \cite{MarcoFreibert2018}.
\end{Rem}

\section{Perturbation of constant solutions}\label{nonconstant}

In this section we describe explicit solutions to Corollary \ref{spin7cor} which vary on $M^4$ i.e. with $d_Mu \neq 0$. Our solutions are obtained by perturbing the K\"ahler potential of the constant solution examples. We shall again assume that $(M^4,\om_1,\om_2,\om_3)$ is a hyperK\"ahler manifold together with an anti-self-dual $2$-form $\om_0$ as in section \ref{constant}. We look for solutions to  (\ref{equ1}) and (\ref{equ2}) with $\tilde{\om}_1$ of the form
\[ \tilde{\om}_1=(a+bs)\om_0+ (p+qs)\om_1 + d_Md_M^c G.\]
When $G=0$ we recover the constant solutions metrics. We also know from the global $dd^c$ lemma that any K\"ahler form in the same cohomology class can be expressed in this form for some function $G$. Equation (\ref{equ2}) can now be written as
\begin{equation}
d_Md_M^c(u-A^2\ddot{G})=0,\label{equ2new} \end{equation}
and condition (\ref{equ1}) becomes
\begin{align}\label{equ1new}
u\cdot \om_1^2 = \frac{s(s+c)}{A}(\Big(&(p+qs)^2-(a+bs)^2 +(p+qs) \Delta_M G\Big)\cdot \om_1^2\\&+2(a+bs)(d_Md_M^cG)\w\om_0 + (d_M d_M^c G)^2),  \nonumber
\end{align}
where $\Delta_M$ denotes the Hodge Laplacian on $(M^4,g_{\om_1},J_1)$. Note that we can also express the last term as
\[(d_Md_M^cG)^2=(\frac{1}{4}(\Delta_MG)^2-\frac{1}{2}\|(d_Md_M^cG)_0\|^2_{g_{\om_1}})\cdot \om_{1}^2, \]
where $(d_Md_M^cG)_0$ denotes the traceless component of $d_Md_M^cG$ or equivalently its projection in $\Lm^2_-$. The system (\ref{equ1new}) and (\ref{equ2new}) is still quite hard to solve in full generality, so we shall make some further simplifying assumptions.

From \cite[Theorem 2.4, 3.2]{Bedford77} we know that a smooth real function $F$ on $M^4$ satisfies 
\[(d_Md^c_MF)^2=0 \] 
if and only if $M^4$ admits a foliation by complex submanifolds, with the leaves corresponding to the integral (complex) curves of the ideal generated by $d_Md^c_MF$. In this case we may assume there exists locally a fibration $\pi:M^4 \to \Sigma^2 $, where $\Sigma^2$ is a complex curve and that $F$ descends to a function on $\Sigma^2$. Under this hypothesis on $G$, for each $s$, we can eliminate the quadratic term in (\ref{equ1new}) .

We shall now illustrate how one can construct metrics with holonomy \textit{equal} to $Spin(7)$ under these assumptions by perturbing the GLPS example. \\

\noindent\textbf{Example.}
As before, consider $M=\mathbb{T}^4$ with local coordinates $(x_1,x_2,x_3,x_4)$ and endowed with the standard flat hyperK\"ahler structure. We set $(a,b,p,q)=(0,0,0,1)$, $A=1$ and consider $G$ of the form:
\[ G(s,x_1,x_2)=v(s)\cdot F(x_1,x_2)+\frac{1}{12}s^4 .\]
$\Sigma^2$ here is the elliptic curve $\mathbb{T}^2$ with coordinates $(x_1,x_2)$. Defining $u$ by
\[ u=\ddot{G} \]
automatically solves (\ref{equ2new}), and (\ref{equ1new}) becomes equivalent to the pair:
\begin{gather*}
	\Delta_MF=\mu F,\\
	\ddot{v}=\mu s^2 v,
\end{gather*}
where $\mu$ is a constant. The reader might recognise that the second equation is the well-known Weber equation. With $\mu=1$, a simple solution is given by
\begin{gather*}
	F=\sin(x_1),\\
	v=U(0,\sqrt{2}s),
\end{gather*}
with $U(a,t)$ denoting the parabolic cylinder function, see \cite[Chapter 19]{HandbookofMathematicalFunctions} for the definition in terms of hypergeometric functions. From Corollary \ref{spin7cor} we find that the connection form $\alpha$ is can be expressed as
\[ \alpha= dx_5-\dot{v}\cos(x_1)dx_2,  \]
where $x_5$ denotes the angular coordinate on the $S^1$ fibre. One can verify that $g_\Phi$, well-defined for $\{s\ | \ U(0,\sqrt{2}s)<1\}$, has holonomy \textit{equal} to $Spin(7)$. Thus this gives a $Spin(7)$ perturbation of the GLPS metric. 

Setting $f(x_1,s)=1+\sin(x_1)v(s)$ and denoting by $(x_6,x_7)$ the coordinates on the $\mathbb{T}^2$ fibres, we can express the connection forms as
\begin{gather*}
\xi=dx_6-x_3dx_1-x_2dx_4,\\
\eta=dx_7-x_4dx_1-x_3dx_2.
\end{gather*}
and hence, the perturbed metric can be expressed in local coordinates as
\[ g_\Phi=s^2(f(dx_1^2+dx_2^2)+dx_3^2+dx_4^2)+ f^{-1}\al^2+s^{-2}(\xi^2+\eta^2)+s^{4}fds^2.\]
One can get other similar examples by choosing $\mu=-1$ and allowing $F$ to depend on both $x_1$ and $x_2$ for instance. 
\begin{Rem}
Another source of compact examples fitting in the above construction are elliptic $K3$ surfaces. These examples however require more sophisticated tools to study as the metrics are no longer explicit.
\end{Rem}

 \noindent We conclude our study of the $S^1$ K\"ahler reduction and now proceed to the $\mathbb{T}^2$ case.
\section{Further reduction \textrm{II}}\label{thirdreduction}
\subsection{$\mathbb{T}^2$ K\"ahler reduction}
Recall from subsection \ref{secondreduction} that there are two natural constraints to impose on the function $s$. The first is that $s$ is a function of $H$, and the second that $s$ and $y:=Hs^{-1/3}$ are independent functions on $P^6$ with $U$ and $W$ orthogonal. Having investigated the former situation, we shall now study the latter case and construct yet more examples of $Spin(7)$ metrics.

We follow the same strategy as in the proof of Theorem \ref{generalAS}. We first define connection $1$-forms $\alpha$ and $\kappa$ on $P$ by
\begin{align}
	\al(\cdot)&=g_{\om}(U,\cdot\ )u,\\
	\kappa(\cdot)&=g_{\om}(W,\cdot\ )w,
\end{align}
where $u:=\|U\|^{-2}_{\om}$ and $w:=\|W\|^{-2}_{\om}$. From our assumptions on $U$ and $W$, it is easy to see that they commute and that they are infinitesimal symmetries of the $SU(3)$-structure. Hence they define a $(\C^\times)^2$ action on $P^6$ and we can once again carry out a K\"ahler reduction:
\[(P^6,\om,\Om,J)\xrightarrow{/\!\!/ \mathbb{T}^2} (\Sigma^2,\tilde{\om},\Upsilon,\tilde{J}).  \]
The holomorphic $(1,0)$-form $\Upsilon$ on $\Sigma^2$ is defined by
\[ \Upsilon_1 -i \Upsilon_2:=\frac{1}{4}(W-iJW)\lrcorner (U-iJU)\lrcorner (H^{1/2}s^{1/3}\Om) \]
and the quotient symplectic form $\tilde{\om}(s,y)$ is given by
\[\om=-\alpha \w dy+\kappa\w ds+\tilde{\om}(s,y). \]
Note that if $\Sigma^2$ is compact then  it must be an elliptic curve, since it has trivial first Chern class. Unlike in the $\C^\times$ case however the horizontal lifts of $U$ and $W$ do not preserve the $Spin(7)$-structure as
\[\mathcal{L}_U\eta=\Upsilon_2 \text{\ \ \ \ and\ \ \ \ } \mathcal{L}_W\xi=-\Upsilon_1.\]
Hence, for each fixed $s$ and $y$, the six dimensional submanifold in $N^8$ corresponds to a $\mathbb{T}^2$ bundle over a $\mathbb{T}^2$ bundle over the surface $\Sigma^2.$ In the case when $\Sigma=\mathbb{T}^2$, this submanifold is just a nilmanifold. Thus, we shall generally refer to these submanifolds as `nilbundles'.\\

From (\ref{connection1}) and (\ref{connection2})  we can equivalently write $\al $ and $\kappa$ as
\begin{gather}
	\al=-ud^cy,\\
	\kappa=wd^cs.
\end{gather}

As in subsection \ref{S1Kahkerreduction}, we can once again express the data $(P^6,\om,\Om,\al,\kappa)$ purely in terms of  $(\Sigma^2,\tilde{\om},\Upsilon, u(s,y),w(s,y))$, and thus provide a way to invert the K\"ahler reduction. The proof follows the same strategy as in the previous case. The result is summed up as follows:
\begin{Th}\label{secondmaintheorem}
Given a complex curve $(\Sigma^2,\tilde{J})$ with a holomorphic $(1,0)$-form $\Upsilon_1-i\Upsilon_2$, two $1$-parameter families of positive functions $u=u(y)$ and $w=w(s)$, and a family of K\"ahler forms $\tilde{\om}(s,y)$ satisfying
\begin{align}
-(s\cdot y) \ \tilde{\om}&=(u\cdot w) \Upsilon_1 \w \Upsilon_2,\label{equation1}\\
	\frac{\partial^2 \tilde{\om}}{\partial y^2}&=d_\Sigma d^c_\Sigma u,\label{equation2}\\
	\frac{\partial^2 \tilde{\om}}{\partial s^2}&=d_\Sigma d^c_\Sigma w,\label{equation3}\\
	\frac{\partial^2 \tilde{\om}}{\partial y \partial s}&=0\label{equation4},
\end{align}
where $d_\Sigma$ denotes the exterior differential on $\Sigma^2$ and $d^c_\Sigma:=\tilde{J}\circ d_{\Sigma}$, and for $(s,y)\in I^+_s \times I^+_y\subset \R^+ \times \R^+$. Then there exists, on the `nilbundle' over $\Sigma^2\times I_s^+ \times I_y^+$, defined by the curvature $2$-forms:
\begin{align*}
	d\alpha&=-d^c_\Sigma u\w dy+\frac{\partial \tilde{\om}}{\partial y},\\
	d\kappa&=d^c_\Sigma w\w ds-\frac{\partial \tilde{\om}}{\partial s},\\
	d\xi&=\Upsilon_1\w \kappa  + \Upsilon_2 \w  w\ ds,\\
	d\eta&=\al \w \Upsilon_2+u\ dy\w \Upsilon_1,
\end{align*}
a torsion free $Spin(7)$-structure $\Phi$ inducing the metric:
\begin{equation}
g_{\Phi}=s^{-2}\eta^2+y^{-2}\xi^2+y\cdot s\ (u^{-1}\al^2+u\ dy^2+w^{-1} \kappa^2+w\ ds^2+g_{\tilde{\om}}), \label{metric} 
\end{equation}
where $g_{\tilde{\om}}$ denotes the K\"ahler metric on $(\Sigma^2,\tilde{J})$ determined by $\tilde{\om}.$
\end{Th}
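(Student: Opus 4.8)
\textbf{Proof plan for Theorem \ref{secondmaintheorem}.}

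The strategy is to reverse the $\mathbb{T}^2$ K\"ahler reduction exactly as Theorem \ref{generalAS} reversed the $S^1$ reduction: starting from the data on $\Sigma^2 \times \R^+_s \times \R^+_y$, one reconstructs the six-manifold $P^6$ as the total space of the $\mathbb{T}^2$ bundle with connection forms $\alpha,\kappa$, then builds $N^8$ as the further $\mathbb{T}^2$ bundle with connection forms $\xi,\eta$, and verifies that the $4$-form $\Phi$ defined via the formula of the general setup is closed and of the right algebraic type. First I would write down $\om$ on $P^6$ by $\om = -\alpha\w dy + \kappa\w ds + \tilde\om(s,y)$ and define $\Om = \Om^+ + i\Om^-$ by the inverse of the reduction formula $H^{1/2}s^{1/3}\Om = (\Upsilon_1 - i\Upsilon_2)\w(\alpha - iJ\alpha)\w(\kappa - iJ\kappa)$ — equivalently $(\alpha+i\,{\cdot})$ combinations — together with $y = Hs^{-1/3}$; here one must check that $(\om,\Om)$ genuinely defines an $SU(3)$-structure, i.e. the compatibility $\tfrac23\om^3 = \Om^+\w\Om^-$, which is where equation (\ref{equation1}) enters: it is exactly the normalization forcing $\|\Upsilon_1\w\Upsilon_2\|$ to match $\tilde\om^2$ with the correct power of $sy$.

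The second step is to check integrability of $J$, i.e. that $(P^6,\om,\Om)$ is K\"ahler. Since $d\om = 0$ is immediate from the curvature prescriptions for $d\alpha,d\kappa$ combined with (\ref{equation2})--(\ref{equation4}) (these four equations are precisely the statement that $d$ of $-\alpha\w dy + \kappa\w ds + \tilde\om$ has no leftover horizontal terms), closedness reduces to showing $d\Om^+$ and $d\Om^-$ satisfy the K\"ahler system (\ref{kahler1})--(\ref{kahler2}), or rather their analogue here with $d\eta,d\xi\in\Lm^2_6$. This is a direct but lengthy computation: differentiate the defining wedge expression for $H^{1/2}s^{1/3}\Om$, use $d\Upsilon_i = 0$ (holomorphicity on $\Sigma$), the prescribed $d\alpha,d\kappa$, and the relations $\alpha = -u\,d^c_\Sigma y$, $\kappa = w\,d^c_\Sigma s$; the point is that the $\mathcal{L}_U\eta = \Upsilon_2$, $\mathcal{L}_W\xi = -\Upsilon_1$ terms are precisely what is needed for $d\xi,d\eta$ to land in $[\Lm^{2,0}]$ after pulling everything back to $P$. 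Having the K\"ahler system, Lemma \ref{torsionlemma} (in the integrable case, i.e. equations (\ref{kahler1})--(\ref{kahler2})) then gives $d\Phi = 0$ for free, since $\Phi$ is assembled from $(\om,\Om,\eta,\xi,H,s)$ exactly by the formula in the general setup with the consistency conditions (\ref{1condition})--(\ref{3condition}) encoded in the torsion forms $\al_\eta,\al_\xi$ matching those dictated by $s$ and $H = y\,s^{1/3}$.

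The final bookkeeping step is to read off the metric: $g_\Phi = s^{-2}\eta^2 + s^{2/3}g_\vp$ and $g_\vp = H^{-2}\xi^2 + Hg_\om$, with $g_\om$ on $P^6$ decomposing along the $\mathbb{T}^2_{\alpha,\kappa}$ fibres as $u^{-1}\alpha^2 + u\,dy^2 + w^{-1}\kappa^2 + w\,ds^2 + g_{\tilde\om}$; substituting $H = ys^{1/3}$ and simplifying the powers gives (\ref{metric}). I expect the main obstacle to be the integrability/K\"ahler verification in step two: keeping track of the $d^c$ operators on $\Sigma$ versus on $P$, and confirming that the cross-term condition (\ref{equation4}) together with (\ref{equation1}) is exactly what kills the non-$(2,0)$ parts of $d\xi,d\eta$ — this is the computational heart, analogous to the omitted details in Theorem \ref{generalAS} but now with two independent fibre directions $s,y$ rather than one, so the algebra is genuinely more involved. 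Everything else (closedness of $\om$, the metric formula, the nilbundle structure) is routine once the curvature identities (\ref{equation2})--(\ref{equation4}) are in place.
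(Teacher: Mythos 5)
Your plan is correct and follows essentially the same route as the paper: the paper's own "proof" of Theorem \ref{secondmaintheorem} is precisely to invert the $\mathbb{T}^2$ K\"ahler reduction by the same strategy as Theorem \ref{generalAS} (reconstruct $(\om,\Om)$ on $P^6$ from the data on $\Sigma^2\times\R_s^+\times\R_y^+$, check the $SU(3)$ compatibility via (\ref{equation1}), verify the K\"ahler/torsion-free system (\ref{kahler1})--(\ref{kahler2}) together with the curvature identities (\ref{connection1})--(\ref{connection2}), and read off $g_\Phi$ from $g_\Phi=s^{-2}\eta^2+s^{2/3}g_\vp$, $g_\vp=H^{-2}\xi^2+Hg_\om$ with $H=ys^{1/3}$), the computational details being omitted there just as in your outline. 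The only small caveat is that equations (\ref{equation2})--(\ref{equation4}) are needed chiefly to make the prescribed curvature forms $d\al$, $d\kappa$ closed (so the nilbundle exists) rather than for $d\om=0$ itself, but this does not affect the validity of your plan.
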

\subsection{A general existence result}\label{generalstrategy}
Before constructing explicit examples we first describe how to find a general solution to Theorem \ref{secondmaintheorem}.

We pick complex coordinate $z=x_1+ix_2$ on $\Sigma^2$ so that we can write $\Upsilon=dx_1+idx_2$ and the K\"ahler form is given by
\[\tilde{\om}=F(y,s)\ dx_1\w dx_2, \]
where $F$ is a positive function on $\Sigma^2$ depending on $y$ and $s$. From equation (\ref{equation4}) we have that
\[F(s,y)=F_1(y)+F_2(s) .\]
Thus, equations (\ref{equation2}) and (\ref{equation3}) are equivalent to the pair:
\begin{gather}
	\frac{\partial^2 F_1}{\partial y^2}=-(u_{x_1,x_1}+u_{x_2,x_2}),\label{dude1}\\
		\frac{\partial^2 F_2}{\partial s^2}=-(w_{x_1,x_1}+w_{x_2,x_2}),\label{dude2}
\end{gather}
while equation (\ref{equation1}) reduces to 
\begin{equation}
s\cdot y\ (F_1(y)+F_2(s))=u(y)\cdot w(s).\label{dude3}
\end{equation}
It follows, without loss of generality, that either $F_1$ or $F_2$ must be zero and hence, that either $u(y)$ or $w(s)$ is a $1$-parameter family of harmonic functions on $\Sigma^2$. In particular if $\Sigma=\mathbb{T}^2$ then either $u$ or $w$ is constant.

Assuming that $F_2=0$, (\ref{dude2}) and (\ref{dude3}) implies that
\[ F_1(y)=\bigg(\frac{u(y)}{y}  \bigg)\cdot G(x_1,x_2) \text{\ \ \ \ and\ \ \ \ } w(s)=s\cdot G(x_1,x_2),\]
for a positive harmonic function $G:\Sigma\to \R^+$, \textit{independent} of $s$ and $y$. Therefore, solving the general system of Theorem \ref{secondmaintheorem} amounts to solving the single PDE 
\begin{equation}
G\cdot\frac{\partial^2}{\partial y^2}(\tilde{u}(y))=y\cdot\Delta_\Sigma \tilde{u}(y), \label{dude4}
\end{equation}
where $\tilde{u}(y):=\frac{u(y)}{y}$ and $\Delta_\Sigma$ denotes the Hodge Laplacian on $\Sigma^2$. Given real analytic initial data we can once again appeal to the Cauchy-Kovalevskaya theorem for the existence and uniqueness of a real analytic solution.
\begin{Cor}
	Given real analytic functions $u_{0}$ and $u_1$ on (an open set of) a complex curve $(\Sigma^2,J_{1},\Upsilon_1-i\Upsilon_2)$ with $u_0>0$, then there exists a unique real analytic solution $\tilde{u}(y)$, for $y$ in a small interval, to (\ref{dude4}) with $\tilde{u}(0)=u_{0}$ and $\frac{\partial\tilde{ u}}{\partial y}(0)=u_1$, and hence by Theorem \ref{secondmaintheorem} a  torsion free $Spin(7)$-structure.
\end{Cor}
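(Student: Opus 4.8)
The plan is to reduce the PDE system of Theorem \ref{secondmaintheorem}, via the simplifications already carried out in subsection \ref{generalstrategy}, to the single scalar equation (\ref{dude4}) and then invoke the Cauchy--Kovalevskaya theorem. Concretely, after fixing the complex coordinate $z=x_1+ix_2$ on $\Sigma^2$ with $\Upsilon=dx_1+idx_2$ and writing $\tilde{\om}=F(y,s)\,dx_1\w dx_2$, equations (\ref{equation2})--(\ref{equation4}) force $F=F_1(y)+F_2(s)$ and, after using the factorisation constraint (\ref{equation1}), allow us to set $F_2\equiv 0$ without loss of generality (the symmetric case $F_1\equiv 0$ being identical after swapping $y\leftrightarrow s$, $u\leftrightarrow w$). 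This pins down $w(s)=s\,G$ and $F_1(y)=\tilde{u}(y)G$ for a positive harmonic $G=G(x_1,x_2)$ on $\Sigma$, and the entire system collapses to the scalar equation (\ref{dude4}) for $\tilde{u}(y)=u(y)/y$. The only remaining work is to present (\ref{dude4}) as a Cauchy problem in the evolution variable $y$.

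First I would observe that (\ref{dude4}) is, for fixed $G>0$, a second-order quasilinear equation of the form $\partial_y^2 \tilde{u} = (y/G)\,\Delta_\Sigma \tilde{u}$, i.e. a (degenerate at $y=0$) wave-type equation with the roles of time and the Laplacian direction interchanged relative to the usual heat/wave setup; in the real-analytic category this degeneracy is harmless. Writing $\Delta_\Sigma$ in the local coordinates $(x_1,x_2)$ as $\partial_{x_1}^2+\partial_{x_2}^2$ (up to the conformal factor coming from $G$, which is itself a fixed real-analytic coefficient), the equation has real-analytic coefficients and is in Cauchy--Kovalevskaya normal form with respect to $y$: the highest-order $y$-derivative $\partial_y^2 \tilde u$ is solved for explicitly, and the right-hand side involves only $\tilde u$ and its $x$-derivatives up to order two, multiplied by real-analytic functions of $(y,x_1,x_2)$. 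Prescribing the Cauchy data $\tilde{u}(0,\cdot)=u_0$ and $\partial_y\tilde{u}(0,\cdot)=u_1$ on the analytic hypersurface $\{y=0\}$, the Cauchy--Kovalevskaya theorem then yields a unique real-analytic solution $\tilde{u}(y,x_1,x_2)$ on a neighbourhood of $\{y=0\}$, hence for $y$ in a small interval. Since $u_0>0$, continuity gives $\tilde{u}>0$ (so $u=y\tilde u$ is a positive function for $y>0$ small), which is exactly what is needed for the positivity hypotheses on $u$ in Theorem \ref{secondmaintheorem}.

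Finally I would feed the resulting data back into Theorem \ref{secondmaintheorem}: from $\tilde u$ we recover $u(y)=y\tilde u(y)$, $w(s)=sG$, $F_1(y)=\tilde u(y)G$, hence the K\"ahler family $\tilde{\om}(s,y)=\tilde u(y)G\,dx_1\w dx_2$, and one checks that equations (\ref{equation1})--(\ref{equation4}) hold by construction — (\ref{equation4}) because $F$ has no mixed $s,y$-dependence, (\ref{equation1}) by the very definition of the factorisation $su\,w=syF_1 = \,\dots$, and (\ref{equation2}), (\ref{equation3}) being precisely (\ref{dude1})/(\ref{dude2}) rewritten, which our choice of $G$ harmonic and $\tilde u$ solving (\ref{dude4}) guarantees. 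Theorem \ref{secondmaintheorem} then produces the torsion free $Spin(7)$-structure on the nilbundle over $\Sigma^2\times\R_s^+\times\R_y^+$.

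The main obstacle, and the only genuinely non-formal point, is verifying that equation (\ref{dude4}) really is in Kovalevskaya normal form with respect to $y$ despite the explicit factor of $y$ on the right-hand side (which vanishes at the initial hypersurface) and the presence of the fixed function $G$ in the denominator: one must confirm that dividing through by $G$ is legitimate (this uses $G>0$, which holds since $G$ is a \emph{positive} harmonic function) and that the coefficient $y/G$, though degenerate, is still real-analytic across $\{y=0\}$ so that the theorem applies without modification. Everything else — the algebra identifying $F_1$, $w$, and the bookkeeping of which of $u$, $w$ carries the harmonic freedom — has already been done in subsection \ref{generalstrategy} and only needs to be cited.
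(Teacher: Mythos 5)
Your proposal is correct and follows essentially the same route as the paper: the reduction in subsection \ref{generalstrategy} to the single equation (\ref{dude4}), followed by an application of the Cauchy--Kovalevskaya theorem with Cauchy data $\tilde{u}(0)=u_0$, $\partial_y\tilde{u}(0)=u_1$ on $\{y=0\}$. Your extra remarks on the normal form (the factor $y/G$ being real analytic since $G>0$ is harmonic) and on positivity of $u$ near $y=0$ simply make explicit what the paper leaves implicit.
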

\begin{Rem}
If we look for separable solutions $\tilde{u}=A(y)\cdot B(x_1,x_2)$, then (\ref{dude4}) becomes equivalent to the pair
\begin{gather}
\frac{\partial^2}{\partial y^2}A(y)=\mu\cdot y\cdot A(y),\label{Airy}\\
\Delta_\Sigma B=\mu \cdot G \cdot B,\label{nonairy}
\end{gather}
where $\mu$ is a constant and equation (\ref{Airy}) is the well-known Airy equation for $\mu \neq 0$.
\end{Rem}
  
In summary, we have reduced the problem of finding $Spin(7)$
metrics admitting  K\"ahler reduction with $\mathbb{T}^2$ symmetry to choosing a positive harmonic function $G$ and solving (\ref{dude4}). We now proceed to describe some explicit examples.

\section{Constant solutions $\mathrm{II}$}\label{constant2}
In this section we describe the simplest solutions which arise when $u$ and $w$ are both constants on $\Sigma^2$. Without loss of generality, this corresponds to setting $\mu=0$, $B=1$ and $G=c$ is a positive constant, in (\ref{Airy}) and (\ref{nonairy}). The general solution is then given by
\begin{gather*}
w(s)=c s,\\
u(y)=y (p+qy),\\
\tilde{\om}=c(p+qy)\ dx_{12},
\end{gather*}
where $p,q\in \R$ and the positivity condition on $u$ implies that the solution is valid for $p+qy>0$.

Denoting the coordinates on the fibres by $(x_3,x_4,x_5,x_6)$, we can express the connection $1$-forms as
\begin{gather*}
\al=cq x_1 dx_{2}+dx_3,\\
\kappa=dx_4,\\
\xi=dx_5+x_1dx_4-csx_2ds,\\
\eta=dx_6+x_2 dx_3-yx_1(p+qy)dy.
\end{gather*}
If we fix $y$ and $s$, then we have that
\[(d\al,d\kappa,d\xi,d\eta)=(cq dx_{12}, 0 ,dx_{14},dx_{23}). \]
Thus, it follows that if $\Sigma=\mathbb{T}^2$ then these codimension $2$ submanifolds are diffeomorphic to nilmanifolds with nilpotent Lie algebra isomorphic to either
\[ (0,0,0,0,12,34) \text{\ \ \ \ or \ \ \ \ } (0,0,0,12,13,24),\]
depending on whether $q$ is zero or not. The former corresponds to the $2$-step nilpotent Lie algebra of the product of two real Heisenberg groups while the latter corresponds to an indecomposable $3$-step nilpotent Lie algebra.

One can compute the dimension of the holonomy algebra and verify that the corresponding metrics determined by expression (\ref{metric}):
\begin{align*}
g_\Phi=\ &s^{-2}\eta^2+y^{-2}\xi^2+s(p+qs)^{-1}\al^2+c^{-1}y\kappa^2\\
&+y^2s(p+qs)dy^2+cys^2ds^2+csy(p+qy)(dx_1^2+dx_2^2),
\end{align*}
have holonomy \textit{equal} to $Spin(7)$. Thus, this classifies the constant solutions examples. We shall now consider some non-constant solutions. 

\section{Examples of non-constant solutions }\label{nonconstant2}
In this section we give explicit examples of $Spin(7)$ metrics which vary on $\Sigma^2$. To illustrate the different cases that can arise from our construction, in the first example we consider a non-compact surface so that we may choose non-constant harmonic functions on $\Sigma^2$ and in the second example we consider a separable solution with $\mu=1$ on $\mathbb{T}^2$. As in the previous section we shall denote the fibre coordinates by $(x_3,x_4,x_5,x_6)$.\\

\noindent\textbf{Example 1.} We take $\Sigma=\C-B_1(0)$, where $B_1(0)$ denotes the unit ball centred at the origin,  with the holomorphic form $\Upsilon=dx_1+idx_2$ as before.
Following the strategy outline in subsection \ref{generalstrategy}
we find that a solution is given by choosing $F_1(y)=y\ln(r)$, $F_2(s)=0$, $w(s)=s\ln(r)$ and $u(y)=y^2 $, where $r:=x_1^2+x_2^2$. The connection $1$-forms are given in local coordinates by:
\begin{gather*}
\al=dx_3+(x_1\ln(r)-2 x_1+2x_2 \arctan\Big(\frac{x_1}{x_2}\Big))dx_2,\\
\kappa=dx_4-\frac{1}{2}s^2d^c_\Sigma\ln (r),\\
\xi=dx_5+x_1dx_4+\frac{1}{2}s^2\ln (r)dx_2\\
\eta=dx_6+x_2dx_3-x_1y^2dy.
\end{gather*}
One can again check that the induced metric has holonomy \textit{equal} to $Spin(7)$. \\

\noindent\textbf{Example 2.} We now take $\Sigma=\mathbb{T}^2$ endowed with the standard flat K\"ahler structure. With $\mu=1$ the general solution to (\ref{Airy}) is the Airy function $\mathrm{Ai}(y)$. Thus, picking $F_1(y)=\mathrm{Ai}(y)\sin(x_1)$, $F_2(s)=0$, $w(s)=s$ and $u(y)=y\mathrm{Ai}(y)\sin(x_1)$, we obtain another solution. The connection $1$-forms are now given by:
\begin{gather*}
\al=dx_3-\mathrm{Ai}'(y)\cos(x_1)dx_{2},\\
\kappa=dx_4,\\
\xi=dx_5+x_1dx_4-sx_2ds,\\
\eta=dx_6+x_2dx_3+y\mathrm{Ai}(y)\cos(x_1)dy.
\end{gather*}
The resulting $Spin(7)$ metric is well-defined on the set where $u>0$. By taking  $\Sigma=\C$ and $\mu=-1$ instead we find yet more examples of $Spin(7)$ holonomy metrics.\\

\noindent\textbf{Concluding Remarks.}
In this paper we have investigated a $\mathbb{T}^2$-reduction of torsion free $Spin(7)$ structures under the assumption that the quotient is K\"ahler. However as shown in section \ref{generalreduction} the quotient $SU(3)$-structure is generally only almost K\"ahler. Thus, it would interesting to investigate if other distinct types of $SU(3)$-structures, aside from the K\"ahler case considered here, can arise as well. From the results of this paper, it follows that, even locally, such a quotient cannot be a Calabi-Yau $3$-fold unless $N^8$ is the Riemannian product $P^6\times \mathbb{T}^2$. Furthermore, we have been able to prove that the quotient cannot be a special generalised CY $3$-fold as well. This still leaves plenty other cases to study. By contrast, in the $G_2$ case it is not hard to see that only two types of $SU(3)$-structures can arise, namely the K\"ahler one or the generic one i.e. with neither $\pi_1$ nor $\pi_2$ zero, in the notation of section \ref{generalreduction}. The latter case occurs, for instance, for the circle reduction of the Bryant-Salamon metric on the spinor bundle of $S^3$. Another interesting problem would be to investigate if one can find smooth completions to our $Spin(7)$ metrics. This will likely necessitate the study of non-free $\mathbb{T}^2$ actions.

\bibliography{biblioG}
\bibliographystyle{plain}

\end{document}